\newcommand{ \N } [0] { \mathbf{N} }
\newcommand{ \Z } [0] { \mathbf{Z} }
\newcommand{ \R } [0] { \mathbf{R} }
\newcommand{ \C } [0] { \mathbf{C} }
\DeclareMathOperator{\kernel}{ker}
\DeclareMathOperator{\opp}{opp}
\DeclareMathOperator{\aff}{aff}
\DeclareMathOperator{\myend}{End}
\DeclareMathOperator{\GL}{GL}
\DeclareMathOperator{\inducedrep}{ind}
\newcommand{ \cochargroup } [0] { X_{*} }
\newcommand{ \Aaff } [1] { \widetilde{A}_{#1} }
\newcommand{ \Caff } [1] { \widetilde{C}_{#1} }
\newcommand{ \Gaff } [1] { \widetilde{G}_{#1} }
\newcommand{ \heckefont } [1] {\mathcal{#1}}
\newcommand{ \vertexfont } [1] {\mathfrak{#1}}
\newcommand{ \alcovefont } [1] {\mathfrak{#1}}
\newcommand{ \apartmentfont } [1] {\mathcal{#1}}
\newcommand{ \chamberfont } [1] {\mathcal{#1}}
\newcommand{ \labelingfont } [1] {\mathbf{#1}}
\newcommand{ \labelingsymbol } [0] {\labelingfont{t}}
\newcommand{ \weylchambersymbol } [0] {\chamberfont{C}}
\newcommand{ \closedweylchambersymbol } [0] {\overline{\weylchambersymbol}}
\newcommand{ \vertexsymbol } [0] {\vertexfont{v}}
\newcommand{ \apartmentsymbol } [0] {\apartmentfont{A}}
\newcommand{ \enlargedapartment } [0] {\apartmentfont{V}}
\newcommand{ \basetag } [0] {\circ}
\newcommand{ \basereflections } [0] {\Delta_{\basetag}}
\newcommand{ \basevertex } [0] {\vertexsymbol_{\basetag}}
\newcommand{ \basealcove } [0] {\alcovefont{A}_{\basetag}}
\newcommand{ \basechamber } [0] {\weylchambersymbol_{\basetag}}
\newcommand{ \baselabeling } [0] {\labelingsymbol_{\basetag}}
\newcommand{ \closedbasealcove } [0] {\overline{\alcovefont{A}}_{\basetag}}
\newcommand{ \closedbasechamber } [0] {\closedweylchambersymbol_{\basetag}}
\newcommand{ \rootsystem } [0] {\Sigma}
\newcommand{ \scaledrootsystem } [0] {\Sigma}
\newcommand{ \corootlattice } [0] {Q}
\newcommand{ \translationsubgroup } [0] {\Lambda}
\newcommand{ \finiteweylgroup } [0] {W_{\basetag}}
\newcommand{ \defeq } [0] { \stackrel{\textup{def}}{=} }
\newcommand{ \suchthat } [0] { \hspace{5pt} \vert \hspace{5pt} }
\newcommand{ \QCGlabel } [1] {\textbf{QCG#1}}
\newtheorem*{defn}{Definition}
\newtheorem*{corollary}{Corollary}
\newtheorem*{maintheorem}{Main Theorem}
\newtheorem*{inductionlemma}{Induction Lemma}
\newtheorem{lemma}{Lemma}[subsection]
\newtheorem{prop}{Proposition}[subsection]
\newtheorem{remark}{Remark}[subsection]
\begin{document}

\title[Conjugacy of non-translations in affine Weyl groups]{Conjugacy Classes of Non-Translations in Affine Weyl Groups and Applications to Hecke Algebras}


\author{Sean Rostami}

\address{University of Wisconsin \\ Department of Mathematics \\ 480 Lincoln Dr. \\ Madison, WI 53706-1325 \\ United States}

\email{srostami@math.wisc.edu}

\email{sean.rostami@gmail.com}

\subjclass[2010]{Primary 20F55; Secondary 20C08, 22E50}

\begin{abstract}
Let $ \widetilde{W} = \Lambda \rtimes W_{\circ} $ be an Iwahori-Weyl group of a connected reductive group $ G $ over a non-archimedean local field. The subgroup $ W_{\circ} $ is a finite Weyl group and the subgroup $ \Lambda $ is a finitely-generated abelian group (possibly containing torsion) which acts on a certain real affine space by translations. I prove that if $ w \in \widetilde{W} $ and $ w \notin \Lambda $ then one can apply to $ w $ a sequence of conjugations by simple reflections, each of which is length-preserving, resulting in an element $ w^{\prime} $ for which there exists a simple reflection $ s $ such that $ \ell ( s w^{\prime} ), \ell ( w^{\prime} s ) > \ell ( w^{\prime} ) $ and $ s w^{\prime} s \neq w^{\prime} $. Even for affine Weyl groups, a special case of Iwahori-Weyl groups and also an important subclass of Coxeter groups, this is a new fact about conjugacy classes. Further, there are implications for Iwahori-Hecke algebras $ \mathcal{H} $ of $ G $: one can use this fact to give dimension bounds on the ``length-filtration'' of the center $ Z ( \mathcal{H} ) $, which can in turn be used to prove that suitable linearly-independent subsets of $ Z ( \mathcal{H} ) $ are a basis.
\end{abstract}

\maketitle

\tableofcontents

\section{Introduction}

A \emph{Coxeter group} is a pair $ ( W, S ) $ consisting of a group $ W $ and a generating set $ S $ which is presented using the relations $ s^2 = 1 $ for all $ s \in S $ and relations of the form $ ( s t )^{m(s,t)} = 1 $ for some, but not necessarily all, pairs $ s, t \in S $. The most common examples of infinite Coxeter groups are \emph{affine Weyl groups}, which are groups generated by the reflections of an affine space across special collections of hyperplanes coming from root systems. The theory of Coxeter groups is both complicated and, especially in the case of affine Weyl groups, highly-developed. Affine Weyl groups are ubiquitous in the subject of smooth representations of algebraic groups over non-archimedean local fields due to their connection with Hecke algebras of reductive groups, and many questions about Hecke algebras can be reduced to questions about affine Weyl groups.

Let $ F $ be a non-archimedean local field and let $ G $ be a connected reductive affine algebraic $ F $-group. If $ J \subset G(F) $ is a compact-open subgroup and $ ( \rho, V ) $ is a smooth complex representation of $ J $ then the \emph{Hecke algebra} $ \heckefont{H} ( G ; J, \rho ) $ is the convolution algebra of all compactly-supported functions $ f : G(F) \rightarrow \myend_{\C} ( V ) $ satisfying $ f ( \jmath \cdot g \cdot \jmath^{\prime} ) = \rho ( \jmath ) \circ f ( g ) \circ \rho ( \jmath^{\prime} ) $ for all $ g \in G(F) $ and $ \jmath, \jmath^{\prime} \in J  $. Many of the simple subcategories in the \emph{Bernstein decomposition} of the category of smooth representations of $ G(F) $ are equivalent to the category of modules over a Hecke algebra of this form. The \emph{center} of such a Hecke algebra is important because it consists of the (functorial) $ G(F) $-linear endomorphisms of the representations in the subcategory. The case that $ J $ is an \emph{Iwahori subgroup} and $ \rho $ is the trivial $ 1 $-dimensional representation yields one particularly important Hecke algebra: the \emph{Iwahori-Hecke algebra}.

In general, an Iwahori subgroup is the group of $ \mathcal{O}_F $-points of a certain connected model $ \mathfrak{G} $ of $ G $ defined in general by Bruhat and Tits, although in nice cases like $ G = \GL_n $ there is a much more straightforward description: an Iwahori subgroup is the inverse image in $ G ( \mathcal{O}_F ) $ of a Borel subgroup in $ G ( \mathbf{k}_F ) $ under the reduction-mod-$ \pi $ map $ \mathcal{O}_F \rightarrow \mathbf{k}_F = \mathcal{O}_F / ( \pi_F ) $. It can be shown that any Iwahori-Hecke algebra $ \heckefont{H} $ has a presentation, called the \emph{Iwahori-Matsumoto presentation}, consisting of a basis of characteristic functions of the double-cosets $ \mathfrak{G} ( \mathcal{O}_F ) \backslash G ( F ) / \mathfrak{G} ( \mathcal{O}_F ) $ together with a certain pair of relations which depend on some numerical parameters coming from $ G $. It turns out that a group called the \emph{Iwahori-Weyl group} serves as a system of representatives for these double-cosets. The Iwahori-Weyl group is in general merely a semidirect extension of an affine Weyl group but its behavior is nonetheless extremely similar to that of a true Coxeter group. Taken together with the numerical parameters, the group-theoretic structure of this ``quasi-Coxeter group'' completely controls the ring-theoretic structure of $ \heckefont{H} $.

In this paper, I prove a group-theoretic property of conjugacy classes in Iwahori-Weyl groups, which is described precisely in the next subsection of this introduction. The class of all Iwahori-Weyl groups properly contains the class of all affine Weyl groups (since affine Weyl groups arise as the Iwahori-Weyl groups of semisimple and simply-connected $ G $) and this property is new even in this narrower context. Additionally, there are implications for Iwahori-Hecke algebras $ \heckefont{H} $. For example, this property can be used to show that suitable linearly-independent collections of functions in the center $ Z ( \heckefont{H} ) $ also \emph{span} the center. The prototypical example of such a collection is the \emph{Bernstein basis}, although the existence and precise definition of such a basis do not appear in the literature for Iwahori-Hecke algebras of completely general connected reductive groups. The article \cite{roro} fills this gap, which I explain in more detail next, and also serves as a sample application of the main theorem of this paper.

In unpublished work, concerning essentially only the case of split $ G $, Bernstein introduced a particularly important basis for $ Z ( \heckefont{H} ) $ whose elements can be effectively calculated (by computer, if desired) and which simultaneously have a straightforward representation-theoretic interpretation: the basis elements are indexed by $ \finiteweylgroup $-orbits of cocharacters valued in a certain maximal torus $ T \subset G $, and if $ \mathcal{O} = \{ \mu_1, \mu_2, \ldots, \mu_r \} $ is such an orbit then the corresponding basis element $ z_{\mathcal{O}} $ acts on the Iwahori-fixed subspace of the (normalized) induced representation $ \inducedrep ( \chi ) $ of an unramified character $ \chi : T ( F ) \rightarrow \C^{\times} $ by the scalar $ \chi ( \mu_1 ) + \chi ( \mu_2 ) + \cdots + \chi ( \mu_r ) $. This work was extended to the \emph{affine Hecke algebra} on any reduced root datum with any parameter system by Lusztig in \cite{lusztig}.

An affine Hecke algebra on a reduced root datum $ \Psi = ( X, \Phi, X^{\vee}, \Phi^{\vee} ) $ is constructed by using the \emph{extended affine Weyl group} $ W^{\prime} = X^{\vee} \rtimes \finiteweylgroup ( \Phi ) $ as a vector space basis, choosing parameters, and mimicking the Iwahori-Matsumoto presentation abstractly (see \S3.2 of \cite{lusztig} and \S7.1 of \cite{humphreys} for details). These abstractly defined affine Hecke algebras are useful in the study of reductive groups for the following reason: in some cases, for example if $ G $ is \emph{unramified}, any Iwahori-Hecke algebra of $ G $ is naturally isomorphic to an affine Hecke algebra for appropriate choice of root datum and parameters.

Unfortunately, many Iwahori-Weyl groups are \emph{not} the extended affine Weyl group of any root datum. A specific example of such a $ G $ is a ramified even-dimensional special orthogonal group (see \S1.16 in \cite{tits} for the definition of such a group): such a group has \emph{torsion} in the translation subgroup of its Iwahori-Weyl groups, which simply cannot happen in the extended affine Weyl group of a root datum because its translation subgroup $ X^{\vee} $ is free by definition. Consequently, many Iwahori-Hecke algebras are \emph{not} affine Hecke algebras and so are not within the scope of the Bernstein/Lusztig work.

In the article \cite{roro}, I extend the Bernstein/Lusztig results to Iwahori-Hecke algebras of \emph{all} connected reductive $ F $-groups. After establishing, analogous to \cite{lusztig}, a Bernstein presentation for an Iwahori-Hecke algebra $ \heckefont{H} $ of a general connected reductive $ F $-group, I determine, again following \cite{lusztig}, a Bernstein basis for $ Z ( \heckefont{H} ) $. To prove that these functions indeed span the center, I apply the main theorem of this paper as explained in \S\ref{Sapplicationtoheckealgebras} below.

\subsection{Statement of results}

\begin{center}
\emph{More precise definitions of everything here are given in \S\ref{Snotation} and \S\ref{Siwahoriweylgroups}.}
\end{center}

Let $ F $ be a non-archimedean local field and $ G $ a connected reductive affine algebraic $ F $-group. Fix a maximal $ F $-split torus $ A \subset G $ and let $ \widetilde{W} $ be the corresponding Iwahori-Weyl group, which acts on the vector space $ \enlargedapartment \defeq \cochargroup ( A ) \otimes_{\Z} \R $. It is known that $ \widetilde{W} $ contains as a normal subgroup the affine Weyl group $ W_{\aff} ( \scaledrootsystem ) $ of a reduced root system $ \scaledrootsystem $, and that if $ \translationsubgroup \subset \widetilde{W} $ is the subgroup of elements which act on $ \enlargedapartment $ by translations then $ \widetilde{W} $ splits as $ \widetilde{W} = \translationsubgroup \rtimes \finiteweylgroup ( \scaledrootsystem ) $ (here $ \finiteweylgroup $ denotes the finite Weyl group). Further, there are sections for $ \Omega \defeq \widetilde{W} / W_{\aff} ( \scaledrootsystem ) $, so that $ \widetilde{W} $ also splits as $ \widetilde{W} = W_{\aff} ( \scaledrootsystem ) \rtimes \Omega $. If $ \Delta_{\aff} $ is a Coxeter generating set for $ W_{\aff} ( \scaledrootsystem ) $ and $ \ell $ is the corresponding length function, then $ \ell $ extends to $ \widetilde{W} $ by inflation. Denote by $ \basereflections \subset \Delta_{\aff} $ a Coxeter generating set for $ \finiteweylgroup ( \scaledrootsystem ) $.

The main result of the paper is the following:
\begin{maintheorem}
Fix $ w \in \widetilde{W} $.

If $ w \notin \Lambda $ then there exists $ s \in \Delta_{\aff} $ and (if necessary) $ s_1, \ldots, s_n \in \Delta_{\aff} $ such that, setting $ w^{\prime} \defeq s_n \cdots s_1 w s_1 \cdots s_n $,
\begin{itemize}
\item $ \ell ( s_i \cdots s_1 w s_1 \cdots s_i ) = \ell ( w ) $ for all $ i $,

\item both $ \ell ( s w^{\prime} ) > \ell ( w^{\prime} ) $ and $ \ell ( w^{\prime} s ) > \ell ( w^{\prime} ) $, and

\item $ s w^{\prime} s \neq w^{\prime} $.
\end{itemize}
\end{maintheorem}
Note that the last two properties asserted by the Main Theorem can be unified into ``$ \ell ( s w^{\prime} s ) > \ell ( w^{\prime} ) $''.

\begin{remark}
A related result appears in the preprint \cite{HN}: that for any element $ w $ in the extended affine Weyl group of a (reduced) root datum there is a sequence of conjugations by simple reflections, each of which preserves or decreases length, resulting in an element $ w^{\prime} $ which is \emph{minimal} length in its conjugacy class. Note however that the \cite{HN} result is also true for \emph{finite} Weyl groups, originally proven by \cite{GP}, whereas the Main Theorem here is special to \emph{infinite} groups (length is bounded on a finite group!).
\end{remark}

Denote by $ \translationsubgroup / \finiteweylgroup $ the set of $ \finiteweylgroup ( \Sigma ) $-conjugacy classes in $ \translationsubgroup $ and recall that $ \ell $ is constant on any $ \mathcal{O} \in \translationsubgroup / \finiteweylgroup $. Denote by $ \Omega ( w ) $ the $ \Omega $-coordinate of any $ w \in \widetilde{W} $.

Let $ \heckefont{H} $ be an Iwahori-Hecke algebra for $ G $. By analyzing the equations that define the center $ Z ( \heckefont{H} ) $, the Main Theorem can be used to prove dimension bounds for a certain filtration/partition of $ Z ( \heckefont{H} ) $:
\begin{corollary}
If $ Z_{L,\tau} ( \heckefont{H} ) \subset Z ( \heckefont{H} ) $ is the $ \C $-subspace of functions supported only on those $ w $ for which $ \ell ( w ) \leq L $ and $ \Omega ( w ) = \tau $, and if $ N_{L,\tau} $ is the total number of $ \mathcal{O} \in \translationsubgroup / \finiteweylgroup $ such that $ \ell ( \mathcal{O} ) \leq L $ and $ \Omega ( t ) = \tau $ for all $ t \in \mathcal{O} $ then
\begin{equation*}
\dim_{\C} ( Z_{L,\tau} ( \heckefont{H} ) ) \leq N_{L,\tau}
\end{equation*}

It follows that if $ \{ z_{ \mathcal{O} } \}_{ \mathcal{O} \in \translationsubgroup / \finiteweylgroup } $ is a linearly-independent subset of $ Z ( \heckefont{H} ) $ such that $ z_{ \mathcal{O} } $ is supported only on those $ w $ for which $ \ell ( w ) \leq \ell ( \mathcal{O} ) $ and $ \Omega ( w ) $ is the same for all $ w $ supporting $ z_{ \mathcal{O} } $ then $ \{ z_{ \mathcal{O} } \}_{ \mathcal{O} \in \translationsubgroup / \finiteweylgroup } $ is a \emph{basis}.
\end{corollary}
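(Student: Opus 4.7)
The plan is to show that every $z \in Z_{L,\tau}(\heckefont{H})$ is determined by its coefficients (in the Iwahori-Matsumoto basis $\{T_w\}$) on a single representative from each $\finiteweylgroup$-orbit $\mathcal{O} \subset \translationsubgroup$ with $\ell(\mathcal{O}) \leq L$ and $\Omega(\mathcal{O}) = \tau$; this produces a linear injection $Z_{L,\tau}(\heckefont{H}) \hookrightarrow \C^{N_{L,\tau}}$ and hence the dimension bound. The second assertion is then automatic: the elements $\{z_{\mathcal{O}} : \ell(\mathcal{O}) \leq L, \Omega(\mathcal{O}) = \tau\}$ form a linearly independent subset of $Z_{L,\tau}(\heckefont{H})$ of size $N_{L,\tau}$, hence span by the bound, and taking the union over all $L$ and $\tau$ exhausts $Z(\heckefont{H})$.

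Two computational ingredients drive the argument, both obtained by comparing coefficients in the centrality equation $T_s z = z T_s$ using the Iwahori-Matsumoto quadratic relation. The first is a Geck--Pfeiffer style fact: if $\ell(sws) = \ell(w)$ then $c_w = c_{sws}$, obtained by examining the coefficient of $T_{ws}$ on both sides. In particular, for $t \in \translationsubgroup$ the coefficient $c_t$ depends only on the $\finiteweylgroup$-orbit of $t$, since any two members of such an orbit are connected by a chain of length-preserving simple conjugations. The second ingredient is a ``key relation'': when $\ell(sw^{\prime}s) > \ell(w^{\prime})$ (the unified form of the Main Theorem), examining the coefficient of $T_{w^{\prime}s}$ yields
\begin{equation*}
c_{w^{\prime}} \;=\; q \cdot c_{sw^{\prime}s} \;-\; (q-1) \cdot c_{w^{\prime}s},
\end{equation*}
where both $sw^{\prime}s$ and $w^{\prime}s$ have length strictly greater than $\ell(w^{\prime})$.

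Injectivity is then a descending induction on length. Given $z \in Z_{L,\tau}(\heckefont{H})$ with $c_t = 0$ for every $t \in \translationsubgroup$ in the support, one shows $c_w = 0$ for every $w$: for $\Omega(w) \neq \tau$ or $\ell(w) > L$ the support hypothesis applies; for $w \in \translationsubgroup$ with $\Omega(w) = \tau$ and $\ell(w) \leq L$ this is the standing assumption (possibly after invoking Geck--Pfeiffer to trade $w$ for an orbit representative); and for $w \notin \translationsubgroup$ with $\Omega(w) = \tau$ and $\ell(w) \leq L$, apply the Main Theorem to obtain a length-preserving chain $w \rightsquigarrow w^{\prime}$ (so $c_w = c_{w^{\prime}}$) together with a reflection $s$ satisfying $\ell(sw^{\prime}s) > \ell(w^{\prime})$, and invoke the key relation, whose right-hand side vanishes by inductive hypothesis. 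Conjugation by simple reflections preserves $\Omega$, so all elements appearing in the induction lie in the relevant $\Omega$-class $\tau$. The critical subtlety is precisely the strict inequality $\ell(sw^{\prime}s) > \ell(w^{\prime})$: without it the term $c_{sw^{\prime}s}$ could refer to an element of length equal to $\ell(w^{\prime})$, where the induction has no traction, and the descent would stall --- this is exactly why the Main Theorem, with the unified form noted in the remark, is the right tool.
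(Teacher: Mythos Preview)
Your proposal is correct and follows essentially the same approach as the paper: the two computational ingredients you isolate are precisely the paper's Lemmas on lateral-conjugacy and the Diamond Property (derived from the same centrality equations, with $x = w's$), and your descending induction on length is exactly the paper's repeated application of the Main Theorem plus these lemmas to express each non-translation coefficient in terms of strictly longer ones. The only cosmetic differences are that the paper writes the parameter as $q(s)$ rather than a uniform $q$, and it phrases the argument as ``express $h_w$ as a linear combination of $h_x$ with $\ell(x) > L$ or $x \in \translationsubgroup$'' rather than as an injectivity-of-restriction statement, but the content is identical.
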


\subsection{Outline of paper}

In \S\ref{Snotation}, I set some notation and define most of the objects that will be used throughout the paper. I use several non-standard but convenient notations, and almost all of them can be found here. Two exceptions are a ``quasi-Coxeter group'' and the Hecke algebra on such a group, which are treated in \S\ref{Siwahoriweylgroups} and \S\ref{SSdefofheckealgebra}, respectively.

In \S\ref{Siwahoriweylgroups}, I recall the notion of \emph{Iwahori-Weyl group} and several of its most important properties, mostly to make explicit the scope of the Main Theorem. Reductive groups do not appear after this section.

In \S\ref{Smarkedalcoves}, I define what is a \emph{marked alcove}. This is just a straightforward generalization of the notion of the \emph{type} of a face of an alcove. This extension is necessary to include Iwahori-Weyl groups, rather than just affine Weyl groups, in the scope of the paper.

In \S\ref{Sdiamonds}, I precisely define the \emph{Diamond Property} for an element of a Coxeter group. In short, the Diamond Property is the property asserted by the Main Theorem. I then define an equivalent property that refers only to pairs of alcoves and verify the equivalence of the two definitions.

In \S\ref{Slemmas}, I prove some simple geometric lemmas about Weyl chambers, alcoves, hyperplanes, etc. that will be used throughout \S\ref{Sproof}.

In \S\ref{Sproof}, I prove the Main Theorem, divided into three cases. In \S\ref{SSdominantcase}, I prove the ``dominant case'', Proposition \ref{PdominantnontranslationshaveDCP}, of the Main Theorem: \emph{if $ w \notin \Lambda $ and $ w $ sends the base alcove $ \basealcove $ into the dominant chamber $ \basechamber $ then $ w $ has the Diamond Property}. It is obvious from the hypothesis that $ \ell ( s w ) > \ell ( w ) $ for all $ s \in \basereflections $, and it is not hard to visualize why there also exists $ s \in \basereflections $ such that $ \ell ( w s ) > \ell ( w ) $. In \S\ref{SSantidominantcase}, I prove the ``anti-dominant case'', Proposition \ref{PantidominantnontranslationshaveCP}, of the Main Theorem: \emph{if $ w \notin \Lambda $ and $ w ( \basevertex ) \in \closedbasechamber^{\opp} $ then $ w $ has the Diamond Property}. This is the most difficult case and the general case can be reduced to this one (the complexity of the anti-dominant case is in some sense maximal while that of the dominant case is minimal--this can be quantified somewhat by noting that $ w ( \basealcove ) \subset \basechamber^{\opp} \Rightarrow \ell ( s w ) < \ell ( w ) $ for all $ s \in \basereflections $). The basic idea is that, by carefully inspecting the relative position and orientation of the alcoves $ \basealcove $ and $ w ( \basealcove ) $, one can perform an infinite sequence of conjugations by $ \Delta_{\aff} $ which do not \emph{decrease} length and which continually move the alcoves in ``different directions'', guaranteeing an eventual length-\emph{increasing} conjugation by $ \Delta_{\aff} $. In \S\ref{SSintermediatecase}, I finish proving the Main Theorem, showing that for an \emph{arbitrary} $ w \notin \Lambda $ one can repeatedly perform conjugations by $ \Delta_{\aff} $ which do not \emph{decrease} length and such that eventually the situation qualifies for the anti-dominant case.

In \S\ref{Sapplicationtoheckealgebras}, I use the Main Theorem to give dimension bounds, Proposition \ref{Pdimensionbounds}, for every subspace in the ``length filtration'' of $ Z ( \heckefont{H} ) $. The term ``length filtration'' is a slight abuse, since it is necessary to first filter $ Z ( \heckefont{H} ) $ by the lengths of its supporting elements and then partition each of those subspaces by the $ \Omega $-components of its supporting elements (if $ \Omega $ is infinite then the subspaces in the length filtration are \emph{infinite} dimensional, and without refining it further most Iwahori-Hecke algebras would be outside the scope of the paper).

In \S\ref{Sfigures}, I include several color pictures that illustrate the iterative arguments that occur in the proof of the Main Theorem. I use the affine Weyl group and apartment of the exceptional type $ \Gaff{2} $ because there are too many coincidences for extremely symmetric types like $ \Aaff{2} $ to correctly explain things, and because I can suppress hyperplane labels for $ \Gaff{2} $ since its labelings are, unlike $ \Aaff{2} $ and $ \Caff{2} $, unambiguous (the alcoves are $ 30^{\circ} $-$ 60^{\circ} $-$ 90^{\circ} $ triangles).

\subsection{Acknowledgements} I thank Thomas Haines for carefully reading earlier drafts of this paper and suggesting a very large number of changes to the exposition, and especially for noticing that the original proof occurring in \S\ref{SSintermediatecase} was needlessly labyrinthine. I also thank my postdoctoral mentor, Tonghai Yang, for bringing me to the wonderful University of Wisconsin at Madison. Finally, I thank the referee for suggesting many improvements to the introduction, and also for the care and speed with which the report was composed and returned.

\section{Notation and Setup} \label{Snotation}

The symbols $ \N $, $ \Z $, $ \R $, and $ \C $ refer to the natural numbers (including $ 0 $), the integers, the real numbers, and the complex numbers.

\subsection{Root systems and affine Weyl groups} \label{SSweylgroups}

Let $ \rootsystem $ be a \emph{reduced} and \emph{irreducible} root system. Let $ \finiteweylgroup $ be the finite Weyl group of $ \rootsystem $, let $ \basereflections $ be a simple system for $ \finiteweylgroup $.

Let $ \apartmentsymbol $ be the $ \R $-vector space spanned by the dual root system $ \rootsystem^{\vee} $. Let $ \langle -, - \rangle $ be the natural pairing $ \rootsystem^{\vee} \times \rootsystem \rightarrow \Z $ and $ \langle -, - \rangle_{\R} $ the extension to $ \apartmentsymbol \times \apartmentsymbol^{\vee} \rightarrow \R $. Let $ \rootsystem_{\aff} $ be the affine root system associated to $ \rootsystem $, i.e. the set $ \rootsystem + \Z $ of affine functionals on $ \apartmentsymbol $. The term \emph{hyperplane} always means the null-set of an element of $ \rootsystem_{\aff} $, not just an arbitrary codimension-$ 1 $ affine subspace. The term \emph{root hyperplane} means the null-set of an element of $ \rootsystem $.

Let $ W_{\aff} $ be the affine Weyl group of $ \rootsystem_{\aff} $ and $ \Delta_{\aff} $ the simple system for $ W_{\aff} $ extended from $ \basereflections $. Denote by $ s_{\aff} $ the single element of $ \Delta_{\aff} \setminus \basereflections $ and by $ H_{\aff} $ the hyperplane fixed pointwise by $ s_{\aff} $. Denote by $ \ell : W_{\aff} \rightarrow \N $ the usual length function relative to the generating set $ \Delta_{\aff} $. Let $ \corootlattice \subset W_{\aff} $ be the subgroup of translations by elements of $ \rootsystem^{\vee} $, so that $ W_{\aff} = \corootlattice \rtimes \finiteweylgroup $.

Choose a \emph{special} vertex $ \basevertex \in \apartmentsymbol $ and identify $ \finiteweylgroup $ to the finite Weyl group at $ \basevertex $. Let $ \basechamber $ be the Weyl chamber at $ \basevertex $ corresponding to $ \basereflections $ and let $ \basealcove \subset \basechamber $ be the alcove for which $ \basevertex \in \closedbasealcove $. If $ \weylchambersymbol $ is a Weyl chamber (at some arbitrary special vertex), denote by $ \weylchambersymbol^{\opp} $ the opposite chamber.

\subsection{Simplices and topology}

The term \emph{face} always means a codimension-$ 1 $ facet of an alcove (or Weyl chamber). The term \emph{wall} always means the unique hyperplane containing some face of some alcove (or Weyl chamber). If $ \alcovefont{A} $ and $ \alcovefont{B} $ are (distinct) adjacent alcoves then denote by $ \alcovefont{A} \vert \alcovefont{B} $ the wall separating them.

The term \emph{half-space} refers to one of the connected components of the complement in $ \apartmentsymbol $ of a hyperplane (in particular, half-spaces are open, and therefore also Weyl chambers and alcoves).

A subset $ \mathfrak{R} \subset \apartmentsymbol $ is called \emph{simplicial} iff there exists a set $ \mathbb{S} $ of alcoves in $ \apartmentsymbol $ such that $ \cup_{\alcovefont{A} \in \mathbb{S}} \alcovefont{A} \subset \mathfrak{R} \subset \cup_{\alcovefont{A} \in \mathbb{S}} \overline{\alcovefont{A}} $. For example, half-spaces and Weyl chambers are simplicial.

\subsection{Galleries and distances} \label{SSgalleries}

The \emph{length} of a gallery $ ( \alcovefont{B}_0, \alcovefont{B}_1, \ldots, \alcovefont{B}_n ) $ is defined to be $ n $ (for consistency with the length function $ \ell $). To say that a hyperplane $ H $ is an \emph{intermediate wall} of a (non-stuttering) gallery $ ( \alcovefont{B}_0, \alcovefont{B}_1, \ldots, \alcovefont{B}_n ) $ is the same as to say that there exists an index $ 0 \leq j < n $ such that $ H $ is the unique hyperplane containing the face shared by $ \alcovefont{B}_j $ and $ \alcovefont{B}_{j+1} $.

I sometimes use the fact that if $ \mathfrak{R} $ is simplicial and (topologically) convex then it is convex in the combinatorial sense, i.e. if $ \alcovefont{A}, \alcovefont{A}^{\prime} \subset \mathfrak{R} $ are alcoves and $ \mathcal{G} $ is a \emph{minimal} gallery from $ \alcovefont{A} $ to $ \alcovefont{A}^{\prime} $ then $ \alcovefont{A}^{\prime \prime} \subset \mathfrak{R} $ for \emph{all} $ \alcovefont{A}^{\prime \prime} \in \mathcal{G} $ (see Theorem 5.11.4 in \cite{BGW}). I sometimes refer to this property as \emph{simplicial convexity}. For example, half-spaces and Weyl chambers are simplicially-convex.

By \emph{infinite gallery} I mean an infinite sequence $ ( \alcovefont{B}_0, \alcovefont{B}_1, \ldots ) $ of alcoves such that for all $ i \geq 0 $ the alcoves $ \alcovefont{B}_i $ and $ \alcovefont{B}_{i+1} $ are adjacent. An infinite gallery is \emph{minimal} iff for \emph{all} $ 0 \leq i < j $ the finite sub-gallery $ ( \alcovefont{B}_i, \ldots, \alcovefont{B}_j ) $ is minimal in the usual sense.

Let $ d $ be the usual $ \N $-valued metric on the set of all alcoves: $ d ( \alcovefont{A}, \alcovefont{B} ) $ is defined to be the minimum length among all galleries from $ \alcovefont{A} $ and $ \alcovefont{B} $. Equivalently, $ d ( \alcovefont{A}, \alcovefont{B} ) $ is the total number of hyperplanes separating $ \alcovefont{A} $ from $ \alcovefont{B} $--see Theorem 5.1.4 in \cite{BGW}. I also use two extensions of this distance function $ d $:

If $ \mathfrak{R} \neq \emptyset $ is a \emph{simplicial} subset and $ \alcovefont{A} \subset \apartmentsymbol $ is an alcove then define
\begin{equation*}
d ( \alcovefont{A}, \mathfrak{R} ) \defeq \min ( d ( \alcovefont{A}, \alcovefont{B} ) \suchthat \text{all alcoves } \alcovefont{B} \subset \mathfrak{R} )
\end{equation*}

If $ \vertexfont{w} $ is a \emph{vertex}, and $ \mathfrak{R} $ is as before, then I define
\begin{equation*}
d ( \vertexfont{w}, \mathfrak{R} ) \defeq \min ( d ( \alcovefont{A}, \mathfrak{R} ) \suchthat \text{all alcoves $ \alcovefont{A} \subset \apartmentsymbol $ such that } \vertexfont{w} \in \overline{\alcovefont{A}} )
\end{equation*}

In applications, $ \mathfrak{R} $ will be either a single half-space or a Weyl chamber.

\section{Iwahori-Weyl Groups} \label{Siwahoriweylgroups}

In this section, I define the object which is the main focus of this paper: the \emph{Iwahori-Weyl group}.

\subsection{Definition and key properties} \label{SSdefofiwahoriweylgroup}

\begin{center}
\emph{In this subsection, I briefly explain what is an Iwahori-Weyl group and isolate its key properties. The purpose here is merely to explain the scope of the Main Theorem, so all proofs are omitted, although I give references whenever possible.}
\end{center}

Let $ F $ be a non-archimedean local field and let $ G $ be a connected reductive affine algebraic $ F $-group. Let $ A \subset G $ be a maximal $ F $-split torus and set $ M \defeq C_G ( A ) $, a minimal $ F $-Levi subgroup.

Certain group homomorphisms called \emph{Kottwitz homomorphisms} are very useful to understand the theory of parahoric subgroups, and in particular Iwahori subgroups. The Kottwitz homomorphism of a connected reductive affine algebraic $ F $-group $ H $ is a surjective group homomorphism $ \kappa_H : H(F) \twoheadrightarrow \Omega_H $, where $ \Omega_H $ is a finitely-generated abelian group whose precise definition is not relevant to this paper--see \S7 of \cite{kottwitz} for the definitions of $ \kappa_H $ and $ \Omega_H $ (the map $ \kappa_H $ occurring here is (7.7.1) in \cite{kottwitz}). The kernel of the Kottwitz homomorphism is denoted by $ H(F)_1 \defeq \kernel ( \kappa_H ) $.

One may define the \emph{Iwahori-Weyl group} of $ ( G, A ) $ to be the quotient $ \widetilde{W} \defeq N_G ( A ) ( F ) / M(F)_1 $. Note that this is seemingly different from the quotient occurring in Remark 9 of the Appendix to \cite{PRH}, but it can be shown that there is a tautological isomorphism between the two quotients. Proposition 8 combined with Remark 9 of the Appendix to \cite{PRH} shows if $ I \subset G(F) $ is an Iwahori subgroup then the double-cosets modulo $ I $ are naturally represented by $ \widetilde{W} $. The Iwahori-Weyl group $ \widetilde{W} $ acts on the vector space $ \enlargedapartment \defeq \cochargroup ( A ) \otimes_{\Z} \R $.

There are two extremely important ways to express the Iwahori-Weyl group $ \widetilde{W} $ as a semidirect product. By the work of Bruhat and Tits, it is known that there exists a reduced root system $ \scaledrootsystem $ such that the affine Weyl group $ W_{\aff} ( \scaledrootsystem ) $, in the sense of Ch VI \S2 no. 1 of \cite{bourbaki}, is a subgroup of $ \widetilde{W} $ (this root system is called an \emph{\'{e}chelonnage} in \S1.4 of \cite{BT1}; see \S4 of \cite{tits} for an extremely nice table listing $ \scaledrootsystem $ for every almost-simple group, and much more). Denoting by $ W_{\circ} ( \scaledrootsystem ) $ the finite Weyl group of $ \scaledrootsystem $, it can be shown that $ \widetilde{W} = \Omega_M \rtimes W_{\circ} ( \scaledrootsystem ) $ and that $ \widetilde{W} = W_{\aff} ( \scaledrootsystem ) \rtimes \Omega_G $. Further, the subgroup $ \Omega_M $ acts on $ \enlargedapartment $ by translations and the subgroup $ \Omega_G $ acts on $ \enlargedapartment $ by invertible affine transformations that stabilize any prescribed base alcove in $ \enlargedapartment $. For more details of all these semidirect products, consult \cite{PRH} and \cite{HRo}.

\begin{remark}
A few comments are necessary to emphasize the small but important difference between the notion of an Iwahori-Weyl group and the possibly more familiar notion of an extended affine Weyl group of a (reduced) root datum. First, it is possible that both $ \Omega_M $ and $ \Omega_G $ have torsion elements (in fact, it can be shown that the torsion of the former is contained in the torsion of the latter). Second, the elements of $ \Omega_M $ are not actually translations, but merely \emph{act} by translations on $ \enlargedapartment $. Third, it is possible that some non-identity elements in $ \Omega_M $ act by the identity on $ \enlargedapartment $.
\end{remark}

\subsection{Axiomatization}

Using the previous discussion \S\ref{SSdefofiwahoriweylgroup} as a guide, I now isolate the relevant properties of the Iwahori-Weyl group and present them axiomatically for clarity.

Let $ \mathcal{N} $ be the group of invertible affine transformations of $ \apartmentsymbol $ which normalize $ W_{\aff} $. Fix a finitely-generated abelian group $ \Omega $, a group homomorphism $ \psi : \Omega \rightarrow \mathcal{N} $, and act by $ \Omega $ on $ \apartmentsymbol $ via this $ \psi $.

\begin{defn}
The \emph{Quasi-Coxeter Group} $ \widetilde{W} $ extended from $ W_{\aff} $ by $ \Omega \stackrel{\psi}{\rightarrow} \mathcal{N} $ is the semidirect product $ W_{\aff} \rtimes \Omega $ and acts on $ \apartmentsymbol $ in the obvious way: $ ( w, \tau ) ( a ) \defeq w ( \tau ( a ) ) $ for all $ ( w, \tau ) \in \widetilde{W} $ and $ a \in \apartmentsymbol $. Denote by $ \Omega ( w ) $ the projection of $ w \in \widetilde{W} $ into $ \Omega $.
\end{defn}
Note that if $ w, w^{\prime} \in \widetilde{W} $ are conjugate then $ \Omega ( w ) = \Omega ( w^{\prime} ) $, since $ \Omega $ is abelian.

\begin{remark}
Strictly speaking, the space $ \apartmentsymbol $ on which the quasi-Coxeter group acts is only a proper subspace of the space $ \enlargedapartment $ on which the Iwahori-Weyl group acts when $ G $ is not semisimple. But due to the way that affine root hyperplanes in $ \enlargedapartment $ are defined, the details of which I omit in this paper, the difference is totally irrelevant from a group-theory perspective. The setup that I use is essentially the same as that used in Ch VI \S2 no. 3 of \cite{bourbaki}.
\end{remark}

Let $ \translationsubgroup \subset \widetilde{W} $ be the subgroup consisting of all elements that act by translations on $ \apartmentsymbol $, and note that $ \translationsubgroup $ is obviously normalized by $ \finiteweylgroup \subset W_{\aff} $. Extend the length function $ \ell : W_{\aff} \rightarrow \N $ to $ \widetilde{W} $ by inflation along the projection $ W_{\aff} \rtimes \Omega \rightarrow W_{\aff} $.

I impose the following hypotheses:
\begin{itemize}
\item \QCGlabel{1} Assume that $ \tau ( \basealcove ) = \basealcove $ for all $ \tau \in \Omega $.

\item \QCGlabel{2} Assume that $ \translationsubgroup $ is a semidirect complement, i.e. that $ \widetilde{W} = \translationsubgroup \rtimes \finiteweylgroup $.

\item \QCGlabel{3} Assume that $ \ell $ is constant on each $ \finiteweylgroup $-conjugacy class in $ \translationsubgroup $.

\item \QCGlabel{4} Assume that $ \translationsubgroup $ is finitely-generated and abelian.
\end{itemize}
Note that by choice of $ \mathcal{N} $, the action by $ \Omega $ on $ \apartmentsymbol $ permutes the set of hyperplanes in $ \apartmentsymbol $. Therefore, hypothesis \QCGlabel{1} is equivalent to the hypothesis that $ \tau ( \Delta_{\aff} ) = \Delta_{\aff} $ for all $ \tau \in \Omega $.

\section{Marked Alcoves} \label{Smarkedalcoves}

The definitions in this section, which are mostly just a variant on the notion of the \emph{type} of a face, will be used heavily in \S\ref{SSantidominantcase} and \S\ref{SSintermediatecase}.

\begin{defn}
A \emph{Labeling} of an alcove $ \alcovefont{A} \subset \apartmentsymbol $ is a bijection from $ \Delta_{\aff} $ to the set of walls of $ \alcovefont{A} $. A \emph{Marked Alcove} is a triple $ ( \alcovefont{A}, \vertexsymbol, \labelingsymbol ) $ such that $ \alcovefont{A} $ is an alcove, $ \vertexsymbol \in \overline{\alcovefont{A}} $ is a special vertex and $ \labelingsymbol $ is a labeling of $ \alcovefont{A} $. The \emph{Weyl Chamber} of a marked alcove $ ( \alcovefont{A}, \vertexsymbol, \labelingsymbol ) $ is the unique Weyl chamber at $ \vertexsymbol $ containing $ \alcovefont{A} $.
\end{defn}

Whenever the special vertex $ \vertexsymbol $ and labeling $ \labelingsymbol $ of a marked alcove $ ( \alcovefont{A}, \vertexsymbol, \labelingsymbol ) $ are understood and there is no danger of confusion, I abuse notation and refer to $ \alcovefont{A} $ as the marked alcove. Accordingly, if $ \alcovefont{A} $ represents a marked alcove then its special vertex is denoted by $ \vertexsymbol_{ \alcovefont{A} } $, its labeling by $ \labelingsymbol_{ \alcovefont{A} } $, its Weyl chamber by $ \weylchambersymbol_{ \alcovefont{A} } $, and the hyperplane $ \labelingsymbol_{\alcovefont{A}} ( s ) $ is called simply ``the wall of $ \alcovefont{A} $ labeled by $ s $''.

\begin{defn}
Two marked alcoves $ ( \alcovefont{A}, \vertexfont{v}, \labelingfont{t} ) $ and $ ( \alcovefont{B}, \vertexfont{w}, \labelingfont{s} ) $ are called \emph{Compatible} iff there exists $ w \in \widetilde{W} $ such that $ \alcovefont{B} = w ( \alcovefont{A} ) $, $ \vertexfont{w} = w ( \vertexfont{v} ) $ and $ \labelingfont{s} = w \circ \labelingfont{t} $. The marked alcoves are called \emph{NT-Compatible} iff $ w \notin \translationsubgroup $.
\end{defn}

Finally, the base alcove $ \basealcove $ is given the tautological labeling, and all other alcoves inherit (in general, multiple) labelings via the action of $ \widetilde{W} $ in the obvious way:
\begin{defn}
The \emph{Base Labeling} is the bijection $ \baselabeling $ from $ \Delta_{\aff} $ to the set of walls of the base alcove $ \basealcove $ defined by assigning to $ s $ the unique wall of $ \basealcove $ that is fixed pointwise by $ s $. The \emph{Base Marking} is the marked alcove $ ( \basealcove, \basevertex, \baselabeling ) $.

For each $ w \in \widetilde{W} $, the \emph{$ w $-Labeling} is defined to be the bijection $ \labelingsymbol_w \defeq w \circ \baselabeling $ from $ \Delta_{\aff} $ to the set of walls of the alcove $ w ( \basealcove ) $. The \emph{$ w $-Marked Alcove} is by definition the triple $ ( w ( \basealcove ), w ( \basevertex ), \labelingsymbol_w ) $.
\end{defn}

As before, I sometimes abuse notation by using $ w ( \basealcove ) $ to refer to the $ w $-marked alcove. Note that $ w ( \basealcove ) $ is compatible with $ \basealcove $ and it is NT-compatible with $ \basealcove $ if and only if $ w \notin \translationsubgroup $.

\begin{remark}
When $ \Omega = \{ 1 \} $, alcoves are in bijection with $ w $-marked alcoves (due to simple-transitivity of affine Weyl groups on alcoves) and a labeling is essentially just the assignment to every face of every alcove its \emph{type} in the usual way.
\end{remark}

The following operation will be used frequently in the limiting/inductive arguments of \S\ref{SSantidominantcase} and \S\ref{SSintermediatecase}:
\begin{defn}
For any marked alcove $ ( \alcovefont{A}, \vertexfont{v}, \labelingfont{t} ) $ and any $ s \in \Delta_{\aff} $, the marked alcove $ ( \alcovefont{A}, \vertexfont{v}, \labelingfont{t} )^s $ is by definition the triple $ ( s_H ( \alcovefont{A} ), s_H ( \vertexfont{v} ), s_H \circ \labelingfont{t} ) $, where $ H \defeq \labelingfont{t} ( s ) $ is the wall of $ \alcovefont{A} $ labeled by $ s $.
\end{defn}

Note that if two marked alcoves $ \alcovefont{A} $ and $ \alcovefont{B} $ are NT-compatible then $ \alcovefont{A}^s $ and $ \alcovefont{B}^s $ are also NT-compatible for all $ s \in \Delta_{\aff} $.

I frequently use the fact that applying a sequence of various $ * \mapsto *^s $ operations to a single marked alcove results in a sequence of marked alcoves whose (un-marked) alcoves form a \emph{gallery}. Conversely, if $ \alcovefont{B}_0 $ is a marked alcove and $ ( \alcovefont{B}_0, \ldots, \alcovefont{B}_n ) $ is a gallery with no repeated alcoves, then each $ \alcovefont{B}_i $ becomes a marked alcove in a unique way, by iteratively applying $ * \mapsto *^s $ operations across each intermediate wall of the gallery. In this situation, the ``label'' of $ \alcovefont{B}_i \vert \alcovefont{B}_{i+1} $ is understood to refer to the element of $ \Delta_{\aff} $ corresponding to the wall $ \alcovefont{B}_i \vert \alcovefont{B}_{i+1} $ relative to the labeling of $ \alcovefont{B}_i $ (or $ \alcovefont{B}_{i+1} $) inherited from $ \alcovefont{B}_0 $.

\section{Diamond Properties} \label{Sdiamonds}

\subsection{Lateral-conjugacy and the diamond property in the group}

\begin{defn}
$ w, w^{\prime} \in \widetilde{W} $ are \emph{Laterally-Conjugate} iff there exist $ s_1, \ldots, s_n \in \Delta_{\aff} $ such that $ w^{\prime} = s_n \cdots s_1 w s_1 \cdots s_n $ and $ \ell ( s_i \cdots s_1 w s_1 \cdots s_i ) = \ell ( w ) $ for all $ i $.
\end{defn}

Any $ w \in \widetilde{W} $ is always considered to be laterally-conjugate to itself.

\begin{defn}
$ w \in \widetilde{W} $ has the \emph{Direct Diamond Property} iff there exists $ s \in \Delta_{\aff} $ such that
\begin{itemize}
\item $ s w s \neq w $,

\item $ \ell ( s w ) > \ell ( w ) $, and

\item $ \ell ( w s ) > \ell ( w ) $.
\end{itemize}
\end{defn}

By using the well-known Lemma \ref{Lhumphreys}, these three properties \emph{could} be replaced by the single property ``$ \ell ( s w s ) > \ell ( w ) $'', but this formulation is not as convenient for me.

\begin{defn}
$ w \in \widetilde{W} $ has the \emph{Diamond Property} iff it is laterally-conjugate to an element with the Direct Diamond Property.
\end{defn}

I frequently use the following geometric characterization of length:
\begin{lemma} \label{Lgeometriclength}
Let $ w \in \widetilde{W} $ and $ s \in \Delta_{\aff} $ be arbitrary. If $ H \defeq \baselabeling ( s ) $ and $ K \defeq \labelingsymbol_w ( s ) $ then
\begin{itemize}
\item $ \ell ( s w ) > \ell ( w ) $ if and only if $ \basealcove $ and $ w ( \basealcove ) $ are on the \emph{same} side of $ H $,

\item $ \ell ( w s ) > \ell ( w ) $ if and only if $ \basealcove $ and $ w ( \basealcove ) $ are on the \emph{same} side of $ K $, and

\item $ \ell ( w ) = d ( \basealcove, w ( \basealcove ) ) $.
\end{itemize}
\end{lemma}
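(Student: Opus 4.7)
The plan is to reduce all three assertions to the corresponding classical facts in the affine Coxeter group $W_{\aff}$ and then transfer them to $\widetilde{W}$ through the inflation of length. Writing $w = u\tau$ with $u \in W_{\aff}$ and $\tau \in \Omega$, assumption \QCGlabel{1} gives $\tau(\basealcove) = \basealcove$, so $w(\basealcove) = u(\basealcove)$, and by definition of inflation $\ell(w) = \ell(u)$. The third bullet then follows immediately from the standard Coxeter-theoretic equality $\ell(u) = d(\basealcove, u(\basealcove))$, which says that length counts the number of hyperplanes separating $\basealcove$ from $u(\basealcove)$.

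For the first bullet, I would observe that $sw = (su)\tau$ in the semidirect product, so $\ell(sw) = \ell(su)$, and invoke the classical left-descent criterion in $W_{\aff}$: $\ell(su) > \ell(u)$ if and only if the wall $H = \baselabeling(s)$ fixed by $s$ does not separate $\basealcove$ from $u(\basealcove) = w(\basealcove)$, i.e. they lie on the same side of $H$.

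The second bullet requires the classical right-descent criterion to be transported through the $\Omega$-component. The equivalent form of \QCGlabel{1} noted just after the axioms states that $\tau \Delta_{\aff} \tau^{-1} = \Delta_{\aff}$; in particular $s^{\prime} \defeq \tau s \tau^{-1}$ lies in $\Delta_{\aff}$, and since $\tau$ permutes the walls of $\basealcove$, the fixed wall of $s^{\prime}$ is $\baselabeling(s^{\prime}) = \tau(H)$. Rewriting $ws = u\tau s = (u s^{\prime})\tau$ shows $\ell(ws) = \ell(u s^{\prime})$, while
\[
K = \labelingsymbol_w(s) = w(H) = u(\tau(H)) = u(\baselabeling(s^{\prime})).
\]
The classical right-descent criterion in $W_{\aff}$---namely $\ell(u s^{\prime}) > \ell(u)$ iff $\basealcove$ and $u(\basealcove)$ lie on the same side of $u(\baselabeling(s^{\prime}))$---then yields $\ell(ws) > \ell(w)$ iff $\basealcove$ and $w(\basealcove)$ lie on the same side of $K$, as desired.

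The entire argument is a straight translation of well-known affine Coxeter facts through the length convention on $\widetilde{W}$; the only delicate point is the bookkeeping required to commute $\tau$ past $s$ in the second bullet, and this is resolved cleanly by the permutation of $\Delta_{\aff}$ provided by \QCGlabel{1}. No essential obstacle arises.
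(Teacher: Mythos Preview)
Your argument is correct and follows exactly the approach the paper takes: reduce to the classical affine Coxeter case via the decomposition $w = u\tau$, using that $\ell$ is inflated from $W_{\aff}$ and that $\Omega$ stabilizes $\basealcove$ (hence permutes $\Delta_{\aff}$). The paper's proof is a two-sentence sketch invoking these same ingredients and citing \cite{humphreys} for the $\Omega = \{1\}$ case; you have simply unpacked the bookkeeping, particularly the conjugation $s' = \tau s \tau^{-1}$ needed for the right-descent criterion, which is precisely what the paper means by ``immediate by definition of the labeling $\labelingsymbol_w$.''
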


\begin{proof}
When $ \Omega = \{ 1 \} $ this is all well-known: see Proposition (c) in \S4.4 and Theorem (b) in \S4.5 of \cite{humphreys}. The more general statement is immediate by definition of the labeling $ \labelingsymbol_w $ because $ \ell $ factors through $ W_{\aff} $ and $ \Omega $ stabilizes $ \basealcove $.
\end{proof}

\subsection{Lateral-conjugacy and the diamond property in the apartment}

Here are the gallery-theoretic versions of the above 3 definitions:
\begin{defn}
An ordered pair $ ( \alcovefont{A}, \alcovefont{B} ) $ of marked alcoves is \emph{Laterally-Conjugate} to another pair $ ( \alcovefont{A}^{\prime}, \alcovefont{B}^{\prime} ) $ iff there exists a gallery $ ( \alcovefont{A}_0, \alcovefont{A}_1, \ldots, \alcovefont{A}_n ) $ from $ \alcovefont{A} $ to $ \alcovefont{A}^{\prime} $ and a gallery $ ( \alcovefont{B}_0, \alcovefont{B}_1, \ldots, \alcovefont{B}_n ) $ from $ \alcovefont{B} $ to $ \alcovefont{B}^{\prime} $ such that
\begin{itemize}
\item $ d ( \alcovefont{A}_i, \alcovefont{B}_i ) = d ( \alcovefont{A}, \alcovefont{B} ) $, and

\item both $ \alcovefont{A}_i \vert \alcovefont{A}_{i+1} $ and $ \alcovefont{B}_i \vert \alcovefont{B}_{i+1} $ have the \emph{same} label.
\end{itemize}
for all $ i $.
\end{defn}

Note the symmetry in the definition of lateral-conjugacy: $ ( \alcovefont{A}, \alcovefont{B} ) $ is laterally-conjugate to $ ( \alcovefont{A}^{\prime}, \alcovefont{B}^{\prime} ) $ if and only if $ ( \alcovefont{B}, \alcovefont{A} ) $ is laterally-conjugate to $ ( \alcovefont{B}^{\prime}, \alcovefont{A}^{\prime} ) $.

\begin{defn}
A pair $ \{ \alcovefont{A}, \alcovefont{B} \} $ of marked alcoves has the \emph{Direct Diamond Property} iff there exists $ s \in \Delta_{\aff} $ such that
\begin{itemize}
\item $ \mathbf{t}_{\alcovefont{A}} ( s ) \neq \mathbf{t}_{\alcovefont{B}} ( s ) $,

\item both alcoves are on the same side of $ \mathbf{t}_{\alcovefont{A}} ( s ) $, and

\item both alcoves are also on the same side of $ \mathbf{t}_{\alcovefont{B}} ( s ) $.
\end{itemize}
\end{defn}

Note that, due to the symmetry in the definition, the Direct Diamond Property refers to \emph{unordered} pairs of alcoves.

\begin{defn}
A pair $ \{ \alcovefont{A}, \alcovefont{B} \} $ of marked alcoves has the \emph{Diamond Property} iff it is laterally-conjugate to a pair with the direct diamond property.
\end{defn}

\subsection{Equivalence}

It is easy to show using the previous lemma that the two notions of Diamond Property coincide:
\begin{lemma} \label{Lequivalentdiamonds}
The element $ w \in \widetilde{W} $ has the Diamond Property if and only if the pair $ \{ \basealcove, w ( \basealcove ) \} $ of marked alcoves has the Diamond Property.
\end{lemma}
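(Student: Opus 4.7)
The plan is to translate both the Direct Diamond Property and lateral-conjugacy from the group to the apartment through Lemma \ref{Lgeometriclength}, and then to piece the resulting equivalences together.

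First I would verify that $ w $ has the Direct Diamond Property in the group if and only if the ordered pair $ ( \basealcove, w ( \basealcove ) ) $ has the Direct Diamond Property in the apartment. The two length-inequality bullets of each definition match word-for-word via Lemma \ref{Lgeometriclength}, using that $ \labelingsymbol_{\basealcove} = \baselabeling $ and $ \labelingsymbol_{w ( \basealcove )} = \labelingsymbol_w $. For the remaining bullet, observe that since $ s $ lies in the normal subgroup $ W_{\aff} $, the element $ w s w^{-1} $ is the reflection whose fixed hyperplane is $ w ( \baselabeling ( s ) ) = \labelingsymbol_w ( s ) $; hence $ s w s = w $ iff this hyperplane coincides with $ \baselabeling ( s ) $, which is exactly the negation of the first bullet of the pair version.

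Next, I would set up a correspondence between lateral-conjugations of $ w $ in the group and matched-label gallery pairs emanating from $ ( \basealcove, w ( \basealcove ) ) $. Given $ s_1, \ldots, s_n \in \Delta_{\aff} $, put $ u_i \defeq s_1 \cdots s_i $ and $ w_i \defeq u_i^{-1} w u_i $, and define marked alcoves
\begin{equation*}
\alcovefont{A}_i \defeq u_i ( \basealcove ), \quad \alcovefont{B}_i \defeq w u_i ( \basealcove ) = u_i w_i ( \basealcove ),
\end{equation*}
each carrying the marking inherited along the respective galleries from $ \basealcove $ and $ w ( \basealcove ) $. A direct induction using the definition of $ \ast \mapsto \ast^s $, together with the standard conjugation formula for reflections across transported hyperplanes, shows that $ \alcovefont{A}_i = \alcovefont{A}_{i-1}^{s_i} $ acquires the marking $ u_i \circ \baselabeling $ (so it is the $ u_i $-marked alcove) and similarly $ \alcovefont{B}_i = \alcovefont{B}_{i-1}^{s_i} $ acquires the marking $ w u_i \circ \baselabeling $; in particular both galleries carry the same label $ s_i $ at step $ i $. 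Since $ d $ is $ W_{\aff} $-invariant, one then has
\begin{equation*}
d ( \alcovefont{A}_i, \alcovefont{B}_i ) = d ( \basealcove, w_i ( \basealcove ) ) = \ell ( w_i )
\end{equation*}
by Lemma \ref{Lgeometriclength}, so length-preservation of the group sequence is equivalent to distance-preservation of the gallery pair. Conversely, any matched-label gallery pair out of $ ( \basealcove, w ( \basealcove ) ) $ is determined by its label sequence and so comes from a unique such group sequence.

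Combining the two steps yields the lemma. If $ w $ is laterally-conjugate in the group to $ w' $ with the Direct Diamond Property, then the correspondence shows $ ( \basealcove, w ( \basealcove ) ) $ is laterally-conjugate in the apartment to $ ( u ( \basealcove ), u w' ( \basealcove ) ) $, which is the image of $ ( \basealcove, w' ( \basealcove ) ) $ under the label-preserving isometry $ u \in W_{\aff} $; the Direct Diamond Property descends across this isometry to $ ( \basealcove, w' ( \basealcove ) ) $ and then to $ w' $ by the first step. The reverse direction is identical, reading a pair lateral conjugacy and the Direct Diamond Property in the apartment back into the group. The main bookkeeping hurdle will be checking that the gallery-inherited marking at $ \alcovefont{A}_i $ genuinely coincides with the $ u_i $-marking, and that the $ \Omega $-component of $ w $ does not obstruct the identification $ s w s = w \Leftrightarrow w ( \baselabeling ( s ) ) = \baselabeling ( s ) $.
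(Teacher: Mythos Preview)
Your proposal is correct and follows essentially the same approach as the paper: use Lemma~\ref{Lgeometriclength} to match the length conditions with the same-side conditions, use $sws=w \Leftrightarrow w(\baselabeling(s))=\baselabeling(s)$ for the remaining bullet, and use $W_{\aff}$-invariance of $d$ to identify lateral conjugacy in the group with the gallery-pair version. The paper organizes the lateral-conjugacy translation slightly more economically by proving the single-step identity $\ell(sws)=d(\basealcove^s,w(\basealcove)^s)$ directly, which avoids your final ``descent across the isometry $u$'' step; but this is a cosmetic difference, not a substantive one.
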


\begin{proof}
By Lemma \ref{Lgeometriclength}, $ \ell ( s w s ) = d ( \basealcove, s w s ( \basealcove ) ) $. Since $ d $ is invariant under the diagonal action of $ W_{\aff} $, $ d ( \basealcove, s w s ( \basealcove ) ) = d ( s ( \basealcove ), w s ( \basealcove ) ) $. By definition of the labelings $ \baselabeling $ and $ \labelingsymbol_w $, $ d ( s ( \basealcove ), w s ( \basealcove ) ) = d ( \basealcove^s, w ( \basealcove )^s ) $. Altogether, $ \ell ( s w s ) = d ( \basealcove^s, w ( \basealcove )^s ) $. Since the operation $ * \mapsto *^s $ always creates galleries, this shows that the two notions of ``lateral-conjugacy'' are equivalent. Since $ s_H w s_H = w $ if and only if $ w ( H ) = H $, it follows that the condition $ s w s \neq w $ is equivalent to the condition $ \labelingsymbol_w ( s ) \neq \baselabeling ( s ) $. Finally, the fact that the remaining two statements in both Direct Diamond Properties are equivalent follows directly from Lemma \ref{Lgeometriclength}.
\end{proof}

Although I will have no direct use for this in the remainder of the paper, note that the statement of Lemma \ref{Lequivalentdiamonds} could be made much more specific: if $ w $ is laterally-conjugate to $ w^{\prime} $ via the sequence $ s_1, \ldots, s_r $ then $ ( \basealcove, w ( \basealcove ) ) $ is laterally-conjugate to $ ( s_1 \cdots s_r ( \basealcove ), w s_1 \cdots s_r ( \basealcove ) ) $ via a gallery whose intermediate walls are labeled (in tandem) by the same sequence $ s_1, \ldots, s_r $, etc.

\section{Basic Lemmas} \label{Slemmas}

\begin{lemma} \label{Laffinehyperplanesarenotbipolar}
Let $ H $ be a hyperplane in $ \apartmentsymbol $.

If \emph{both} $  \closedbasechamber \cap H \neq \emptyset $ \emph{and} $ \closedbasechamber^{\opp} \cap H \neq \emptyset $ then in fact both $ \closedbasechamber \cap H \subset \partial \basechamber $ and $ \closedbasechamber^{\opp} \cap H \subset \partial \basechamber^{\opp} $.
\end{lemma}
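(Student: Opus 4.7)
The plan is to prove the contrapositive of each containment separately: I will show that if $H$ meets the open dominant chamber $\basechamber$ then $H \cap \closedbasechamber^{\opp} = \emptyset$, which directly contradicts the hypothesis of the lemma in that case. Running the mirror-image argument with $\basechamber$ and $\basechamber^{\opp}$ interchanged handles the second containment, and the two together yield the lemma.

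The mechanism is to read off from the affine functional defining $H$ which side of $H$ each chamber must lie on. By definition $H = \{x \in \apartmentsymbol : \beta(x) + k = 0\}$ for some $\beta \in \rootsystem$ and $k \in \Z$, which I rewrite as $\{x : \beta(x - \basevertex) = c\}$ by absorbing both $k$ and the value $\beta(\basevertex)$ into a single real constant $c$. I would then invoke the standard fact that every root of a root system expands in the simple basis $\basereflections$ with all coefficients of one sign; after replacing $\beta$ by $-\beta$ if necessary (which does not alter $H$), I may assume $\beta = \sum_{\alpha \in \basereflections} c_{\alpha} \alpha$ with each $c_{\alpha} \geq 0$ and at least one $c_{\alpha} > 0$ (the latter because $\beta \neq 0$).

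With this normalization the conclusion is immediate. Any $y \in H \cap \basechamber$ satisfies $\alpha(y - \basevertex) > 0$ for every $\alpha \in \basereflections$, so $c = \beta(y - \basevertex) > 0$. But every $x \in \closedbasechamber^{\opp}$ satisfies $\alpha(x - \basevertex) \leq 0$ termwise, forcing $\beta(x - \basevertex) \leq 0 < c$ and hence $x \notin H$; thus $H \cap \closedbasechamber^{\opp} = \emptyset$, contradicting the hypothesis. I do not anticipate any real obstacle: the only nontrivial input is the uniform-sign fact for roots, which I would simply quote, and the rest is a direct sign-tracking argument together with a translation by $\basevertex$ to normalize the affine equation of $H$.
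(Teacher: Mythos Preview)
Your argument is correct, modulo a small notational slip: in this paper $\basereflections$ denotes the set of simple \emph{reflections}, not the simple roots, so your expansion $\beta = \sum_{\alpha \in \basereflections} c_{\alpha}\alpha$ abuses notation. With the obvious fix (write the sum over the simple roots determined by $\basereflections$) the sign-tracking goes through exactly as you describe, and the contrapositive pair does yield the lemma.

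Your route differs from the paper's. The paper does not expand $\beta$ in simple roots at all; instead it takes $x \in H \cap \closedbasechamber$ and $y \in H \cap \closedbasechamber^{\opp}$, observes that $x - y$ lies in the \emph{linear} root hyperplane $H_{\beta}$ through $\basevertex$, and checks that $x - y \in \closedbasechamber$ (indeed $x - y \in \basechamber$ as soon as $x \in \basechamber$ or $y \in \basechamber^{\opp}$). This contradicts the elementary fact that a root hyperplane through $\basevertex$ cannot meet the open chamber $\basechamber$. The paper's difference trick is slightly more intrinsic (it never chooses a simple basis or invokes the positive/negative dichotomy for roots), while your approach makes the inequality $\beta(x - \basevertex) \leq 0 < c$ completely explicit and is perhaps easier to read. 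Both are short and either would serve.
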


\begin{remark}
I usually apply Lemma \ref{Laffinehyperplanesarenotbipolar} in the following way: if $ H \cap \basechamber \neq \emptyset $ then $ H \cap \closedbasechamber^{\opp} = \emptyset $.
\end{remark}

\begin{proof}
Suppose $ x \in H \cap \closedbasechamber $ and $ y \in H \cap \closedbasechamber^{\opp} $. Since $ H $ is the null-set of an affine root, there exists $ \beta \in \rootsystem $ such that $ x - y \in H_{\beta} $. On the other hand, $ x \in \closedbasechamber $ and $ y \in \closedbasechamber^{\opp} $ implies $ x - y \in \closedbasechamber $. It is easy to check from the definitions of $ \basechamber $ and $ \basechamber^{\opp} $ that if $ x \in \basechamber $ or $ y \in \basechamber^{\opp} $ (or both) then necessarily $ x - y \in \basechamber $. But $ H_{\beta} \cap \basechamber = \emptyset $ since a \emph{root} hyperplane can never intersect a Weyl chamber at $ \basevertex $, so this is impossible and therefore both $ x \notin \basechamber $ and $ y \notin \basechamber^{\opp} $.
\end{proof}

\begin{lemma} \label{Lnegativechambercontainment}
Let $ \alcovefont{A} \subset \basechamber $ be an alcove and $ H $ a wall of $ \alcovefont{A} $ that is \emph{not} a wall of the Weyl chamber $ \basechamber $. Let $ \alcovefont{B} $ be any alcove and $ \vertexsymbol_{ \alcovefont{B} } \in \overline{\alcovefont{B}} $ a vertex.

If both $ \alcovefont{A} $ and $ \basealcove $ are on the same side of $ H $ and $ \vertexsymbol_{ \alcovefont{B} } \in \closedbasechamber^{\opp} $ then $ \alcovefont{B} $ is also on the same side of $ H $ as $ \alcovefont{A} $.
\end{lemma}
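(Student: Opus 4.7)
The plan is to reduce this to Lemma~\ref{Laffinehyperplanesarenotbipolar} used in the form of its remark: once I show $H \cap \basechamber \neq \emptyset$, the remark delivers $H \cap \closedbasechamber^{\opp} = \emptyset$, and everything else becomes a matter of elementary topology and the convexity of Weyl chambers.

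First I would verify that $H$ genuinely meets the open chamber $\basechamber$. The wall $H$ contains a codimension-$1$ face $F$ of $\alcovefont{A}$, and since $\alcovefont{A} \subset \basechamber$ we have $F \subset \overline{\alcovefont{A}} \subset \overline{\basechamber}$. If $F$ were contained in $\partial \basechamber$ it would lie in some wall $W$ of $\basechamber$; but $H \neq W$ because $H$ is by hypothesis not a wall of $\basechamber$, so $F \subset H \cap W$, a set of codimension at least $2$---contradicting $\dim F = \dim \apartmentsymbol - 1$. Thus $F$ has points inside $\basechamber$, and in particular $H \cap \basechamber \neq \emptyset$, so by the remark after Lemma~\ref{Laffinehyperplanesarenotbipolar}, $H \cap \closedbasechamber^{\opp} = \emptyset$.

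Next I would pin down \emph{which} side of $H$ the (connected, hence entirely-on-one-side) set $\closedbasechamber^{\opp}$ lies on, using the vertex $\basevertex$. It lies in both $\overline{\basealcove}$ and $\closedbasechamber^{\opp}$, and cannot lie on $H$ itself (else $H \cap \closedbasechamber^{\opp}$ would be nonempty). Since $\basevertex \in \overline{\basealcove}$ and $\basevertex \notin H$, the point $\basevertex$ lies on the same side of $H$ as $\basealcove$. Therefore all of $\closedbasechamber^{\opp}$ is on that side, which by hypothesis is the side of $\alcovefont{A}$.

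Finally I would transfer this from the vertex $\vertexsymbol_{\alcovefont{B}}$ to the alcove $\alcovefont{B}$: since $\vertexsymbol_{\alcovefont{B}} \in \closedbasechamber^{\opp}$ it sits on the $\alcovefont{A}$-side of $H$ and is not on $H$, and since $\vertexsymbol_{\alcovefont{B}} \in \overline{\alcovefont{B}}$ every sufficiently small open neighborhood of $\vertexsymbol_{\alcovefont{B}}$ meets $\alcovefont{B}$ while remaining entirely on that one side, forcing $\alcovefont{B}$ onto the side of $\alcovefont{A}$ as required. The only place I expect to have to be careful is the codimension comparison in the first step; the rest is simply a clean application of Lemma~\ref{Laffinehyperplanesarenotbipolar} together with standard facts about closures of open simplicial cells.
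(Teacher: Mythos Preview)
Your proof is correct and follows essentially the same approach as the paper: both arguments first show $H \cap \basechamber \neq \emptyset$ via the face of $\alcovefont{A}$ in $H$, then invoke Lemma~\ref{Laffinehyperplanesarenotbipolar}, and finally use connectedness through $\basevertex$ and $\vertexsymbol_{\alcovefont{B}}$ to place $\alcovefont{B}$ on the correct side. The only difference is cosmetic: the paper argues by contradiction (assuming $H$ separates $\alcovefont{B}$ from $\alcovefont{A}$ and exhibiting the path-connected set $\alcovefont{B} \cup \{\vertexsymbol_{\alcovefont{B}}\} \cup \basechamber^{\opp} \cup \{\basevertex\} \cup \basealcove$ to force $H \cap \closedbasechamber^{\opp} \neq \emptyset$), whereas you proceed directly by first fixing the side of $\closedbasechamber^{\opp}$ and then transferring to $\alcovefont{B}$.
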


\begin{proof}
Let $ \alcovefont{f} \subset \overline{\alcovefont{A}} $ be the face supported by $ H $. If it were true that $ \alcovefont{f} \subset \partial \basechamber $ then necessarily $ H $ would be a wall of $ \basechamber $. This is prohibited by hypothesis on $ H $, so $ H \cap \basechamber \neq \emptyset $. Suppose for contradiction that $ H $ separated $ \alcovefont{B} $ from $ \alcovefont{A} $. By hypothesis on $ \alcovefont{A} $, $ H $ must separate $ \alcovefont{B} $ from $ \basealcove $. By hypothesis on $ \vertexsymbol_{ \alcovefont{B} } $, the set $ \alcovefont{B} \cup \{ \vertexsymbol_{ \alcovefont{B} } \} \cup \basechamber^{\opp} \cup \{ \basevertex \} \cup \basealcove $ is path-connected and obviously $ H \cap ( \alcovefont{B} \cup \basealcove ) = \emptyset $, so necessarily $ H \cap \closedbasechamber^{\opp} \neq \emptyset $. But this contradicts Lemma \ref{Laffinehyperplanesarenotbipolar} since $ H \cap \basechamber \neq \emptyset $ is known already.
\end{proof}

\begin{lemma} \label{Ldoublechamberlemma}
Let $ \weylchambersymbol $ be a Weyl chamber at some (arbitrary) special vertex $ \vertexsymbol $.

If $ \vertexsymbol \in \closedbasechamber^{\opp} $ then either $ \weylchambersymbol \cap \basechamber = \emptyset $ or $ \weylchambersymbol \supset \basechamber $.
\end{lemma}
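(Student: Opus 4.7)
The plan is to argue by contradiction: assume both that $ \weylchambersymbol \cap \basechamber \neq \emptyset $ and that $ \weylchambersymbol \not\supset \basechamber $, then locate a wall of $ \weylchambersymbol $ that slices through the open chamber $ \basechamber $, and invoke Lemma \ref{Laffinehyperplanesarenotbipolar} to derive a contradiction.

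First I would pick $ q \in \weylchambersymbol \cap \basechamber $ and, from the failure of the second alternative, a point $ p \in \basechamber \setminus \weylchambersymbol $. Since $ \basechamber $ is convex, the entire segment $ [q, p] $ lies in $ \basechamber $. Because $ q \in \weylchambersymbol $ but $ p \notin \weylchambersymbol $, the segment must exit the closed chamber $ \overline{\weylchambersymbol} $: letting $ r $ be the supremum along $ [q, p] $ (parametrized from $ q $) of the set on which the segment remains in $ \overline{\weylchambersymbol} $, one has $ r \in \partial \weylchambersymbol $, so $ r $ lies on some wall $ H $ of $ \weylchambersymbol $. Since $ r \in \basechamber $, this already gives $ H \cap \basechamber \neq \emptyset $.

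Next, because $ \vertexsymbol $ is a \emph{special} vertex, every wall of $ \weylchambersymbol $ passes through $ \vertexsymbol $ and is the null-set of an affine root of the form $ \alpha - \alpha ( \vertexsymbol ) $ for some $ \alpha \in \rootsystem $ with $ \alpha ( \vertexsymbol ) \in \Z $; in particular each such wall is a hyperplane in the restricted sense of \S\ref{SSweylgroups}. Thus $ H $ is a hyperplane, it contains $ \vertexsymbol $, and by hypothesis $ \vertexsymbol \in \closedbasechamber^{\opp} $, so $ H \cap \closedbasechamber^{\opp} \neq \emptyset $. Together with $ H \cap \basechamber \neq \emptyset $, Lemma \ref{Laffinehyperplanesarenotbipolar} forces $ H \cap \closedbasechamber \subset \partial \basechamber $, contradicting that $ H $ meets the open chamber $ \basechamber $.

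The only real subtlety—quite minor—is the bookkeeping that confirms walls of a Weyl chamber at a special vertex qualify as hyperplanes in this paper's restricted sense (null-sets of elements of $ \rootsystem_{\aff} = \rootsystem + \Z $); but this is precisely what it means for $ \vertexsymbol $ to be special, so the remainder is routine convexity and the rest of the argument is essentially already packaged inside Lemma \ref{Laffinehyperplanesarenotbipolar}.
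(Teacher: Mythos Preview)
Your proof is correct and follows essentially the same route as the paper's: argue by contradiction, produce a wall $H$ of $\weylchambersymbol$ that meets the open chamber $\basechamber$, observe $\vertexsymbol \in H \cap \closedbasechamber^{\opp}$, and invoke Lemma~\ref{Laffinehyperplanesarenotbipolar}. The only difference is cosmetic: the paper locates $H$ combinatorially (choosing alcoves in $\basechamber \cap \weylchambersymbol$ and $\basechamber \setminus \weylchambersymbol$, taking a minimal gallery, and using simplicial convexity of $\basechamber$), whereas you locate it topologically via a line segment and ordinary convexity.
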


\begin{proof}
Suppose for contradiction that both $ \weylchambersymbol \cap \basechamber \neq \emptyset $ and $ \weylchambersymbol \not\supset \basechamber $. Choose an alcove $ \alcovefont{A} \subset \basechamber \setminus \weylchambersymbol $ and an alcove $ \alcovefont{A}^{\prime} \subset \basechamber \cap \weylchambersymbol $. Let $ ( \alcovefont{A}_1, \ldots, \alcovefont{A}_n ) $ be a minimal gallery from $ \alcovefont{A} $ to $ \alcovefont{A}^{\prime} $. By choice of $ \alcovefont{A}, \alcovefont{A}^{\prime} $, there exists a wall $ H $ of $ \weylchambersymbol $ separating $ \alcovefont{A} $ from $ \alcovefont{A}^{\prime} $. Such an $ H $ must be an intermediate wall of the gallery (see Lemma 5.1.5 of \cite{BGW}), say $ H = \alcovefont{A}_j \vert \alcovefont{A}_{j+1} $. Since the gallery is \emph{minimal} and both $ \alcovefont{A}, \alcovefont{A}^{\prime} \subset \basechamber $, simplicial convexity of $ \basechamber $ forces $ \alcovefont{A}_i \subset \basechamber $ for all $ i $. This means that $ \basechamber \cap H \neq \emptyset $, (for example, if $ \alcovefont{f} $ is the common face of $ \alcovefont{A}_j $ and $ \alcovefont{A}_{j+1} $ then $ \alcovefont{f} \subset \basechamber \cap H $). But by hypothesis $ \vertexsymbol \in \closedbasechamber^{\opp} $ and obviously $ \vertexsymbol \in H $, so $ \closedbasechamber^{\opp} \cap H \neq \emptyset $ also. This contradicts Lemma \ref{Laffinehyperplanesarenotbipolar}.
\end{proof}

\begin{lemma} \label{Lgallerytoinfinity}
Let $ \weylchambersymbol $ be a Weyl chamber at some (arbitrary) special vertex.

If $ \basechamber \cap \weylchambersymbol = \emptyset $ then there exists a minimal infinite gallery $ ( \alcovefont{A}_0 = \basealcove, \alcovefont{A}_1, \alcovefont{A}_2, \ldots ) $ within $ \basechamber $ such that
\begin{equation*}
\lim_{i \rightarrow \infty} d ( \alcovefont{A}_i, \weylchambersymbol ) = \infty.
\end{equation*}
(see \S\ref{SSgalleries} for the notions of distance $ d ( * , \weylchambersymbol ) $ and infinite gallery)
\end{lemma}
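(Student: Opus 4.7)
The plan is to build the infinite gallery by following a strictly dominant ray. Choose a regular vector $\xi \in \basechamber - \basevertex$ (for example, the sum of the fundamental coweights), and consider the ray $R = \{ \basevertex + t\xi \suchthat t \geq 0 \}$. Since $\xi$ lies in the open cone $\basechamber - \basevertex$, every point of $R$ with $t > 0$ lies in $\basechamber$. Since $\xi$ is regular (it avoids every root hyperplane through the origin), $R$ meets each affine hyperplane of $\apartmentsymbol$ at most once (for $t > 0$) and transversally, so the alcoves it traverses form an infinite gallery $(\basealcove = \alcovefont{A}_0, \alcovefont{A}_1, \alcovefont{A}_2, \ldots)$ lying in $\basechamber$. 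This gallery is minimal because for $i < j$ the hyperplanes separating $\alcovefont{A}_i$ from $\alcovefont{A}_j$ are exactly the $j - i$ hyperplanes that $R$ crosses between those two alcoves, matching the combinatorial distance exactly.

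Next I would exploit the hypothesis $\basechamber \cap \weylchambersymbol = \emptyset$ to show $\xi$ lies strictly outside the closed direction cone $\overline{D}$, where $D \defeq \weylchambersymbol - \vertexsymbol$ is an open cone with apex at the origin. If $\xi$ were in $D$, then $(\basevertex - \vertexsymbol)/t + \xi$ would still lie in $D$ for large $t$, whence $\basevertex + t\xi \in \vertexsymbol + D = \weylchambersymbol$; but that point also lies in $\basechamber$, contradicting disjointness. Since $\xi$ is regular it does not lie on $\partial D$ either, because $\partial D$ is a union of root hyperplanes through the origin. Therefore $\xi \notin \overline{D}$, and hyperplane separation for closed convex cones furnishes a linear functional $f$ with $f \geq 0$ on $\overline{D}$ but $f(\xi) < 0$. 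Consequently $f(y) \geq f(\vertexsymbol)$ for all $y \in \weylchambersymbol$, while $f(\basevertex + t\xi) = f(\basevertex) + t f(\xi) \to -\infty$ as $t \to \infty$.

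Finally I would convert this to a bound on combinatorial distance: choosing $x_i \in \alcovefont{A}_i \cap R$, the Euclidean distance from $x_i$ to $\weylchambersymbol$ is at least $(f(\vertexsymbol) - f(x_i))/\|f\|$, which tends to infinity, and then a routine argument using the uniformly bounded diameter of alcoves in $\apartmentsymbol$ converts this to $d(\alcovefont{A}_i, \weylchambersymbol) \to \infty$ (any gallery of length $n$ covers Euclidean distance at most linear in $n$). I expect the cone argument of the second paragraph---verifying $\xi \notin \overline{D}$ from only the disjointness hypothesis---to be the main technical step, but the combination of disjointness (excluding the open cone $D$) with regularity (excluding $\partial D$) resolves it cleanly.
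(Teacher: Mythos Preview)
Your argument is correct and takes a genuinely different route from the paper's. The paper constructs the gallery algebraically by iterating a fixed strictly dominant translation $t \in \corootlattice$ (e.g.\ translation by $2\rho^{\vee}$): minimality comes from the length identity $\ell(t^{N}) = N\,\ell(t)$ for dominant translations, and divergence from $\weylchambersymbol$ is proved entirely in the gallery metric by a ball argument (for each $R$, eventually every alcove within gallery-distance $R$ of $t^{N}(\basealcove)$ lies in $\basechamber$, hence outside $\weylchambersymbol$), followed by a triangle-inequality step to handle the alcoves between successive powers of $t$. Your approach is more directly geometric: you follow a regular ray, use convex separation of cones to produce a linear functional witnessing Euclidean divergence, and then compare Euclidean and combinatorial distances via the uniform alcove diameter. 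This is arguably more visual, at the cost of importing tools (hyperplane separation, a metric comparison) external to Coxeter combinatorics; the paper's version stays inside the affine Weyl group formalism and avoids any appeal to the ambient Euclidean structure.

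One small technical point: regularity of $\xi$ ensures that $R$ meets each affine hyperplane transversally and at most once, but it does \emph{not} by itself ensure that $R$ avoids the codimension-$2$ strata where two hyperplanes meet, which is what you need for the sequence of traversed alcoves to literally be a gallery. This is easily repaired by choosing $\xi$ generically inside the open cone $\basechamber - \basevertex$ (the set of bad directions is a countable union of proper linear subspaces), and your later use of regularity to exclude $\xi \in \partial D$ survives any such perturbation.
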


\begin{remark}
It is not always possible to have a sequence of alcoves for which the sequence of distances is \emph{monotone} increasing.
\end{remark}

\begin{proof}
Choose some translation $ t \in \corootlattice $ such that $ t ( \basevertex ) \in \basechamber $ (e.g. translation by $ 2 \rho^{\vee} $ where $ \rho^{\vee} \defeq \omega_1 + \cdots + \omega_r $ and $ \omega_i $ are the fundamental coweights). I claim that the required gallery can be constructed by iterating $ t $.

Let $ t = s_0 s_1 \cdots s_{k-1} $ ($ s_i \in \Delta_{\aff} $) be a reduced expression and let $ [i] $ be the remainder of $ i $ mod $ k $. Give $ \alcovefont{A}_0 \defeq \basealcove $ the base marking as usual and define a sequence of marked alcoves inductively by $ \alcovefont{A}_{i+1} \defeq \alcovefont{A}_i^{s_{[i]}} $ (so the finite subgallery $ ( \alcovefont{A}_0, \alcovefont{A}_1, \ldots, \alcovefont{A}_k ) $ is just the usual gallery associated to the word $ s_0 s_1 \cdots s_{k-1} $). By construction, this sequence is an \emph{infinite gallery} in $ \basechamber $. Since $ \ell ( t^N ) = N \ell ( t ) $, a general property of dominant translations in an affine Weyl group, it follows that the infinite gallery $ ( \alcovefont{A}_0, \alcovefont{A}_1, \alcovefont{A}_2, \ldots ) $ is \emph{minimal}.

I first show that the infinite subsequence $ \alcovefont{A}_{k N} = t^N ( \basealcove ) $, $ N = 0, 1, 2, \ldots $, diverges from $ \weylchambersymbol $, and then I use the triangle-inequality to prove the full limit property.

For any alcove $ \alcovefont{A} \subset \apartmentsymbol $ and radius $ R \in \R $, let $ \mathbb{B} ( \alcovefont{A}, R ) $ be the set of alcoves $ \alcovefont{B} \subset \apartmentsymbol $ such that $ d ( \alcovefont{A}, \alcovefont{B} ) \leq R $. It is clear from the ``cone'' property of Weyl chambers and the boundedness of alcoves that for any $ R \in \R $, there exists $ n_R \in \N $ such that $ \mathbb{B} ( t^N ( \basealcove ), R ) \subset \basechamber $ for all $ N \geq n_R $. It is also clear that $ d ( t^N ( \basealcove ), \weylchambersymbol ) > R $ for all $ N \geq n_R $ because otherwise there would be some alcove $ \alcovefont{B} \subset \weylchambersymbol $ such that $ d ( t^N ( \basealcove ), \alcovefont{B} ) \leq R $, but this would imply $ \alcovefont{B} \subset \mathbb{B} ( t^N ( \basealcove ), R ) \subset \basechamber $, which contradicts the hypothesis $ \basechamber \cap \weylchambersymbol = \emptyset $. This establishes the claim for the subsequence.

Let radius $ R > 0 $ be arbitrary. Fix $ n \defeq k \cdot n_{R+k} $ (recall $ k = \ell ( t ) $). For any $ N \in \N $, let $ \lfloor N \rfloor $ be the \emph{largest} $ m \in \N $ such that $ k m \leq N $. Observe that if $ N \geq n $ then $ \lfloor N \rfloor \geq n_{R+k} $. Altogether, if $ \alcovefont{B} \subset \weylchambersymbol $ is an \emph{arbitrary} alcove and $ N \geq n $ then
\begin{equation*}
R + k < d ( t^{ \lfloor N \rfloor } ( \basealcove ), \alcovefont{B} ) \leq d ( t^{ \lfloor N \rfloor } ( \basealcove ), \alcovefont{A}_N ) + d ( \alcovefont{A}_N, \alcovefont{B} )
\end{equation*}
Since $ d ( t^{ \lfloor N \rfloor } ( \basealcove ), \alcovefont{A}_N ) \leq \ell ( t ) = k $ by definition of $ \lfloor N \rfloor $, it follows that $ d ( \alcovefont{A}_N, \alcovefont{B} ) > R $.
\end{proof}

\section{Proof of Main Theorem} \label{Sproof}

\subsection{Case: the dominant chamber} \label{SSdominantcase}

\begin{prop} \label{PdominantnontranslationshaveDCP}
Fix $ w \in \widetilde{W} $.

If $ w \notin \translationsubgroup $ and $ w ( \basealcove ) \subset \basechamber $ then $ w $ has the Direct Diamond Property realized by some $ s \in \basereflections $.
\end{prop}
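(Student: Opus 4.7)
The hypothesis $\basealcove, w(\basealcove) \subset \basechamber$ makes $\ell(sw) > \ell(w)$ automatic for every $s \in \basereflections$: the wall $H_s \defeq \baselabeling(s)$ is a wall of the Weyl chamber $\basechamber$ at $\basevertex$, and both alcoves lie on the common open $\basechamber$-side of $H_s$, so Lemma \ref{Lgeometriclength} applies. The plan is thus to exhibit one $s \in \basereflections$ for which $K_s \defeq \labelingsymbol_w(s) = w(H_s)$ both (i) fails to separate $\basealcove$ from $w(\basealcove)$, giving $\ell(ws) > \ell(w)$, and (ii) differs from $H_s$, giving $sws \neq w$ (the latter because $sws = w$ if and only if $w$ stabilizes $H_s$ setwise).

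Writing $w = t_{\lambda} v_0$ via $\widetilde{W} = \translationsubgroup \rtimes \finiteweylgroup$, one has $\lambda = w(\basevertex) \in \closedbasechamber$ and $v_0 \neq e$. The walls $\{K_s\}_{s \in \basereflections}$ are precisely the walls through $w(\basevertex)$ of the Weyl chamber $w(\basechamber)$ containing $w(\basealcove)$. I first show that $S \defeq \{s \in \basereflections : K_s \text{ does not separate } \basealcove \text{ from } w(\basealcove)\}$ is non-empty. If $S = \emptyset$, then $\basealcove \subset w(\basechamber)^{\opp}$, and the defining open inequalities on the roots $\beta_s \defeq v_0 \cdot \alpha_s$ imply $\beta_s > 0 \Leftrightarrow \beta_s(\lambda) > 0$ and $\beta_s < 0 \Leftrightarrow \beta_s(\lambda) = 0$ for every $s$; it follows that $v_0^{-1}(\lambda) \in \closedbasechamber$, and by uniqueness of dominant representatives one concludes $v_0^{-1}(\lambda) = \lambda$. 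Hence $v_0$ fixes $\lambda$, lies in the parabolic $W_{I_-}$ with $I_- \defeq \{s : \beta_s < 0\}$, and has all of $I_-$ as right descents, so $v_0$ is the longest element of $W_{I_-}$. A direct calculation then shows $\alpha_s(w(\basealcove)) < 0$ for any $s \in I_-$, contradicting $w(\basealcove) \subset \basechamber$ unless $I_- = \emptyset$; but $I_- = \emptyset$ forces $v_0 = e$, contradicting $w \notin \translationsubgroup$. Lemmas \ref{Laffinehyperplanesarenotbipolar} and \ref{Ldoublechamberlemma} provide an alternate route to several of these geometric constraints.

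Finally, I look for $s \in S$ with $K_s \neq H_s$. Suppose none exists, i.e., $w(H_s) = H_s$ for every $s \in S$. Writing again $w = t_\lambda v_0$, the stabilization translates to $v_0(H_s) = H_s$ together with $\lambda \in H_s$; the former gives $v_0 \cdot \alpha_s = \pm \alpha_s$, and the ``$-$'' case is killed by computing $\alpha_s(w(\basealcove)) = \alpha_s(\lambda) + \alpha_s(v_0(\basealcove)) = 0 - \alpha_s(\basealcove) < 0$, contradicting $w(\basealcove) \subset \basechamber$. Thus $v_0$ fixes each $H_s$ pointwise for $s \in S$. If $|S| \geq 2$, two distinct simple hyperplanes span $\apartmentsymbol$ and force $v_0 = e$; if $|S| = 1$, the pointwise stabilizer of $H_s$ in $\finiteweylgroup$ is $\{e, s\}$, and $v_0 = s$ once again yields $\alpha_s(w(\basealcove)) < 0$, a contradiction. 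The single most delicate step I anticipate is the identification $v_0 = w_{I_-}$ in paragraph 2, which requires extracting the full descent-set condition from the geometric hypothesis $\basealcove \subset w(\basechamber)^{\opp}$ and not merely from the vertex containment $\basevertex \in \overline{w(\basechamber)^{\opp}}$.
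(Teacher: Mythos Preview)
Your first two paragraphs are correct: the reduction $\ell(sw) > \ell(w)$ for $s \in \basereflections$ is immediate, and your argument that $S \neq \emptyset$ via the identification $v_0 = w_{I_-}$ goes through. The third paragraph, however, contains a genuine error. From $v_0 \cdot \alpha_s = +\alpha_s$ you conclude that ``$v_0$ fixes each $H_s$ pointwise for $s \in S$'', but this does not follow: the condition $v_0 \alpha_s = \alpha_s$ says only that $v_0$ preserves $H_s$ \emph{setwise} (and preserves the two half-spaces), not that $v_0|_{H_s} = \identity$. A concrete counterexample in type $A_3$: take $v_0 = s_1$; then $s_1 \cdot \alpha_3 = \alpha_3$, yet $s_1$ moves the fundamental coweight $\omega_1^\vee \in H_{\alpha_3}$. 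Both of your closing cases (``$|S|\geq 2$'' and ``$|S|=1$'') rest on this false pointwise-fixing claim, so the contradiction is never reached and the proof does not close.

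The paper's proof sidesteps this by choosing $s$ \emph{constructively} rather than arguing by contradiction on $S$. It picks any $s \in \basereflections$ with $v_0 \alpha_s < 0$ (equivalently, a wall $H'$ of $v_0(\basechamber)$ that separates $v_0(\basealcove)$ from $\basealcove$; such $s$ exists because $v_0 \neq e$). Writing $\alpha$ for the positive root with null-set $H'$, one first checks $t(H') \neq H'$ using only that a root hyperplane cannot separate two alcoves of $\basechamber$; this forces $w(H_s) = t(H')$ to be $\{\alpha = n\}$ with $n \geq 1$, and since $0 < \alpha < 1$ on $\basealcove$ while $\alpha < n$ on $w(\basealcove)$, both alcoves lie on the same side of $w(H_s)$, giving $\ell(ws) > \ell(w)$. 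The same inequality $t(H') \neq H'$ also yields $sws \neq w$. Thus the paper's $s$ is not first located inside your set $S$ and then tested for $K_s \neq H_s$; a single explicit affine-root computation establishes both required conditions at once.
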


\begin{proof}
Let $ u \in \finiteweylgroup $ and $ t \in \translationsubgroup $ be such that $ w = t \circ u $. By hypothesis, $ u \neq 1 $. Let $ H^{\prime} $ be a wall of $ u ( \basechamber ) $ which separates $ u ( \basealcove ) $ from $ \basealcove $, and note that $ \basevertex \in H^{\prime} $. Let $ H $ be the wall of $ \basealcove $ such that $ u ( H ) = H^{\prime} $ and let $ s \in \basereflections $ be the element fixing $ H $ pointwise (in other words, $ H^{\prime} = \mathbf{t}_u ( s ) $). I claim that $ s $ realizes the Direct Diamond Property for $ w $.

I first claim that $ t ( H^{\prime} ) \neq H^{\prime} $. Suppose for contradiction that $ t ( H^{\prime} ) = H^{\prime} $. Because $ t $ is a \emph{translation}, $ u ( \basealcove ) $ and $ t ( u ( \basealcove ) ) $ are on the \emph{same} side of $ t ( H^{\prime} ) = H^{\prime} $. On the other hand, $ H^{\prime} $ separates $ \basealcove $ from $ u ( \basealcove ) $ by choice. Together, $ H^{\prime} $ separates $ \basealcove $ from $ t ( u ( \basealcove ) ) = w ( \basealcove ) $. But $ w ( \basealcove ) \subset \basechamber $ by hypothesis, so it is impossible for the \emph{root} hyperplane $ H^{\prime} $ to separate $ \basealcove $ from $ w ( \basealcove ) $.

By hypothesis that $ w ( \basealcove ) \subset \basechamber $, it is automatic that $ \ell ( s w ) > \ell ( w ) $. To show that $ \ell ( w s ) > \ell ( w ) $, it suffices by Lemma \ref{Lgeometriclength} to show that both alcoves $ \basealcove $ and $ w ( \basealcove ) $ are on the same side of $ w ( H ) $. Let $ \alpha \in \rootsystem $ be the \emph{positive} root whose null-set is $ H^{\prime} $. By choice of $ H^{\prime} $, $ \langle \alpha, x \rangle_{\R} < 0 $ for all $ x \in u ( \basealcove ) $. Since $ u ( \basevertex ) = \basevertex $ and $ w ( \basevertex ) \in \closedbasechamber $, it must be true that $ t ( \basevertex ) \in \closedbasechamber $. Since $ t $ is a \emph{translation}, this implies that there exists $ n \in \N $ such that $ t ( H^{\prime} ) = w ( H ) $ is the null-set of $ \alpha - n $ and $ t ( u ( \basealcove ) ) = w ( \basealcove ) $ consists of points $ x \in \apartmentsymbol $ such that $ \langle \alpha, x \rangle_{\R} < n $. Since $ t ( H^{\prime} ) \neq H^{\prime} $, it must be true that $ n \geq 1 $. But $ 0 < \langle \alpha, x \rangle_{\R} < 1 \leq n $ for all $ x \in \basealcove $ so $ \basealcove $ and $ w ( \basealcove ) $ are on the same side of $ w ( H ) $, as desired.

I now show that $ s w s \neq w $. Suppose for contradiction that $ s w s = w $. Then $ w ( H ) = H $, and since $ u ( \basevertex ) = \basevertex $, it follows that $ t ( \basevertex ) \in H $. Since $ t $ is a \emph{translation} and $ \basevertex \in H $, $ t ( H ) = H $. Combining with $ w ( H ) = H $ implies $ u ( H ) = H $ also. But $ H^{\prime} \defeq u ( H ) $ so this contradicts $ t ( H^{\prime} ) \neq H^{\prime} $.
\end{proof}

\begin{remark} \label{Rindependenceofchamber}
Note that it is not important which of the two alcoves is considered the ``base'' alcove, nor is it important which chamber of the base alcove is considered ``dominant''. In other words, if $ \alcovefont{A} $ and $ \alcovefont{B} $ are NT-compatible marked alcoves and $ \alcovefont{B} \subset \weylchambersymbol_{\alcovefont{A}} $ then $ \{ \alcovefont{A}, \alcovefont{B} \} $ has the Direct Diamond Property.
\end{remark}

\begin{remark}
It is plausible that one might be able to prove the Diamond Property for general $ w \notin \translationsubgroup $ by proving that there always exists a lateral conjugate $ w^{\prime} $ of $ w $ such that one of $ w^{\prime} ( \basealcove ) $ or $ \basealcove $ is contained in some Weyl chamber of the other. This latter statement is \emph{false}. See Figure \ref{example-g2-annoyingorbit} for an example in the case of the exceptional affine Weyl group $ \Gaff{2} $.
\end{remark}

\subsection{Case: the anti-dominant chamber} \label{SSantidominantcase}

\begin{defn}
Let $ ( \alcovefont{B}_0, \ldots, \alcovefont{B}_n ) $ be a gallery, $ \alcovefont{A} $ an alcove, and $ H $ a wall of $ \alcovefont{A} $.

The triple $ ( ( \alcovefont{B}_0, \ldots, \alcovefont{B}_n ), \alcovefont{A}, H ) $ is an \emph{Umbrella} iff
\begin{enumerate}
\item \label{UMBsamehalfspace} \emph{all} alcoves $ \alcovefont{B}_i $ are on the same side of $ H $ as $ \alcovefont{A} $, and
\item \label{UMBextendstominimal} $ ( \alcovefont{B}_0, \ldots, \alcovefont{B}_n ) $ can be extended to a \emph{minimal} gallery from $ \alcovefont{B}_0 $ to $ \alcovefont{A} $.
\end{enumerate}
\end{defn}

Observe that to say $ ( \alcovefont{B}_0, \ldots, \alcovefont{B}_n ) $ can be extended to a minimal gallery from $ \alcovefont{B}_0 $ to $ \alcovefont{A} $ is the same as to say both that $ ( \alcovefont{B}_0, \ldots, \alcovefont{B}_n ) $ is a minimal gallery itself and that each intermediate wall $ \alcovefont{B}_i \vert \alcovefont{B}_{i+1} $ ($ 0 \leq i < n $) separates $ \alcovefont{B}_i $ from $ \alcovefont{A} $ (I use this observation in the proof of Induction Lemma).

\begin{inductionlemma}
Let $ ( \alcovefont{B}_0, \ldots, \alcovefont{B}_n ) $ be a gallery and $ \vertexsymbol_{\alcovefont{B}_0} \in \overline{\alcovefont{B}}_0 $ a special vertex. Let $ \alcovefont{A}, \alcovefont{A}^{\prime} \subset \basechamber $ be (distinct) adjacent alcoves, separated by a wall $ H $. Let $ H^{\prime} $ be a wall of $ \alcovefont{A}^{\prime} $. Assume that
\begin{enumerate}
\item \label{ENUMinductionhypoth} $ ( ( \alcovefont{B}_0, \ldots, \alcovefont{B}_{n-1} ), \alcovefont{A}, H ) $ is an Umbrella,

\item \label{ENUMvertexinchamber} $ \vertexsymbol_{\alcovefont{B}_0} \in \closedbasechamber^{\opp} $,

\item \label{ENUMawayfrombase} the base alcove $ \basealcove $ is on the \emph{same} side of $ H^{\prime} $ as $ \alcovefont{A}^{\prime} $,

\item \label{ENUMnotbasewall} $ H^{\prime} $ is \emph{not} a wall of the Weyl chamber $ \basechamber $, and

\item \label{ENUMcontradictionhypoth} the wall $ \alcovefont{B}_{n-1} \vert \alcovefont{B}_n $ \emph{separates} $ \alcovefont{B}_{n-1} $ from $ \alcovefont{A} $.
\end{enumerate}

Then $ ( ( \alcovefont{B}_0, \ldots, \alcovefont{B}_n ), \alcovefont{A}^{\prime}, H^{\prime} ) $ is an Umbrella.
\end{inductionlemma}

\begin{proof}
By hypotheses (\ref{ENUMvertexinchamber}), (\ref{ENUMawayfrombase}), and (\ref{ENUMnotbasewall}), Lemma \ref{Lnegativechambercontainment} implies that $ \alcovefont{B}_0 $ is contained on the same side of $ H^{\prime} $ as $ \alcovefont{A}^{\prime} $. Since half-spaces are \emph{simplicially-convex}, it therefore suffices to show only Umbrella Property (\ref{UMBextendstominimal}), i.e. that $ ( \alcovefont{B}_0, \ldots, \alcovefont{B}_n ) $ can be extended to a \emph{minimal} gallery connecting $ \alcovefont{B}_0 $ to $ \alcovefont{A}^{\prime} $ (because then both endpoints of the gallery, and therefore the whole gallery, must be contained in that half-space).

Let $ H_i \defeq \alcovefont{B}_i \vert \alcovefont{B}_{i+1} $ ($ i = 0, \ldots, n-1 $) be all the intermediate walls of the gallery $ ( \alcovefont{B}_0, \ldots, \alcovefont{B}_n ) $. Note that $ H_i $ separates $ \alcovefont{B}_i $ from $ \alcovefont{A} $ for all $ 0 \leq i < n-1 $ by hypothesis (\ref{ENUMinductionhypoth}) (more specifically, Umbrella Property (\ref{UMBextendstominimal})) and for $ i = n-1 $ by hypothesis (\ref{ENUMcontradictionhypoth}). By the observation preceding this proof, it therefore suffices to show that the alcoves $ \alcovefont{A} $ and $ \alcovefont{A}^{\prime} $ are on the \emph{same} side of $ H_i $ for all $ 0 \leq i \leq n - 1 $. But this is obviously true: if the claim were \emph{false} for $ H_i $, then necessarily $ H_i = H $, the only hyperplane separating $ \alcovefont{A} $ from $ \alcovefont{A}^{\prime} $, which would mean that $ H $ separated $ \alcovefont{B}_i $ from $ \alcovefont{A} $, a contradiction to hypothesis (\ref{ENUMinductionhypoth}) (more specifically, Umbrella Property (\ref{UMBsamehalfspace})).
\end{proof}

\begin{prop} \label{PantidominantnontranslationshaveCP}
Fix $ w \in \widetilde{W} $.

If $ w \notin \translationsubgroup $ and $ w ( \basevertex ) \in \closedbasechamber^{\opp} $ then $ w $ has the Diamond Property.
\end{prop}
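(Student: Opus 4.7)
The plan is to use Lemma~\ref{Lequivalentdiamonds} to rephrase the problem in the gallery language and then split into two geometric cases according to the position of the Weyl chamber $\weylchambersymbol_{w(\basealcove)}$. Since $w(\basevertex) \in \closedbasechamber^{\opp}$ by hypothesis, Lemma~\ref{Ldoublechamberlemma} applied to $\weylchambersymbol_{w(\basealcove)}$ leaves only two possibilities: $\weylchambersymbol_{w(\basealcove)} \supset \basechamber$ or $\weylchambersymbol_{w(\basealcove)} \cap \basechamber = \emptyset$. In the first subcase, $\basealcove \subset \weylchambersymbol_{w(\basealcove)}$, so Remark~\ref{Rindependenceofchamber} applied to the NT-compatible pair $\{w(\basealcove), \basealcove\}$ (with the roles of the ``base'' alcove exchanged) immediately yields the Direct Diamond Property, and no lateral conjugation is required.

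The substantive case is $\weylchambersymbol_{w(\basealcove)} \cap \basechamber = \emptyset$. Here I would invoke Lemma~\ref{Lgallerytoinfinity} with $\weylchambersymbol = \weylchambersymbol_{w(\basealcove)}$ to obtain a minimal infinite gallery $(\alcovefont{A}_0 = \basealcove, \alcovefont{A}_1, \alcovefont{A}_2, \ldots)$ inside $\basechamber$ along which $d(\alcovefont{A}_i, \weylchambersymbol_{w(\basealcove)}) \to \infty$. Letting $s_i \in \Delta_{\aff}$ be the label of $\alcovefont{A}_i \vert \alcovefont{A}_{i+1}$ in the marking inherited from $\basealcove$, I would define $\alcovefont{B}_0 \defeq w(\basealcove)$ and $\alcovefont{B}_{n+1} \defeq \alcovefont{B}_n^{s_n}$. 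The goal is to show that at some index $n$ the simultaneous operation $* \mapsto *^{s_n}$ must strictly increase $d(\alcovefont{A}_n, \alcovefont{B}_n)$; since such operations can change distance only by $0$ or $\pm 2$, a strict increase is by $+2$, and by Lemma~\ref{Lgeometriclength} this gives the Direct Diamond Property for the laterally-conjugate pair $(\alcovefont{A}_n, \alcovefont{B}_n)$.

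Termination of the process is the key step, and I would attempt a proof by contradiction. Suppose no step is ever length-increasing. Then at every $n$ the wall $\alcovefont{B}_n \vert \alcovefont{B}_{n+1}$ separates $\alcovefont{B}_n$ from $\alcovefont{A}_n$, which is exactly hypothesis~(5) of the Induction Lemma. The other hypotheses are built into the setup: (2) is the global assumption $w(\basevertex) \in \closedbasechamber^{\opp}$, and (3), (4) follow because $(\alcovefont{A}_i)$ is a minimal gallery living in the simplicially-convex $\basechamber$---so each wall $H_{n+1} \defeq \alcovefont{A}_{n+1} \vert \alcovefont{A}_{n+2}$ fails to be a wall of $\basechamber$ and has $\basealcove$ on its $\alcovefont{A}_{n+1}$-side by minimality. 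Iterating the Induction Lemma then produces an umbrella $((\alcovefont{B}_0, \ldots, \alcovefont{B}_n), \alcovefont{A}_n, H_n)$ for every $n$; in particular $(\alcovefont{B}_0, \ldots, \alcovefont{B}_n)$ is a prefix of a minimal gallery to $\alcovefont{A}_n$, and combined with the triangle inequality this already bounds $d(\alcovefont{B}_n, \alcovefont{A}_n) \leq \ell(w)$ uniformly in $n$.

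The main obstacle is extracting the contradiction with $d(\alcovefont{A}_n, \weylchambersymbol_{w(\basealcove)}) \to \infty$ from this infinite family of umbrellas. The umbrella's half-space property~(1) places every earlier $\alcovefont{B}_i$ on the $\alcovefont{A}_n$-side of $H_n$, and Lemma~\ref{Lnegativechambercontainment} (applicable via $\vertexsymbol_{\alcovefont{B}_0} = w(\basevertex) \in \closedbasechamber^{\opp}$) constrains how the walls $H_n$ can sit relative to $\basechamber$ and $\closedbasechamber^{\opp}$. Tracking the evolution of these walls as $\alcovefont{A}_n$ drifts into $\basechamber$---and in particular showing that the $\alcovefont{B}_n$-gallery cannot both remain within $\ell(w)$ of $\alcovefont{A}_n$ and satisfy the half-space constraints indefinitely---is where the divergence ultimately yields the contradiction. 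Carrying out this bookkeeping precisely, especially verifying hypothesis~(5) at every step and choosing the walls $H_n$ compatibly, is the technical heart of the proof and the place I expect the hardest work.
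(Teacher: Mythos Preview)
Your overall architecture is exactly the paper's: reduce via Lemma~\ref{Lequivalentdiamonds}, split with Lemma~\ref{Ldoublechamberlemma}, dispose of the containment case by Remark~\ref{Rindependenceofchamber}, and in the disjoint case walk along the infinite gallery from Lemma~\ref{Lgallerytoinfinity} while feeding the Induction Lemma. The place where your proposal diverges from the paper is the endgame, and there you are making the problem far harder than it is.

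The observation you are missing is that the $\alcovefont{B}$-gallery is literally the $w$-image of the $\alcovefont{A}$-gallery. Because $\labelingsymbol_{\alcovefont{B}_0} = \labelingsymbol_w = w \circ \baselabeling$, the operation $*^{s_n}$ on the $\alcovefont{B}$-side is reflection across $w(H_n)$, so $\alcovefont{B}_n = w(\alcovefont{A}_n)$ for every $n$. In particular $\alcovefont{B}_n \subset w(\basechamber) = \weylchambersymbol_{\alcovefont{B}}$ for all $n$, and therefore
\[
d(\alcovefont{A}_n, \alcovefont{B}_n) \;\geq\; d(\alcovefont{A}_n, \weylchambersymbol_{\alcovefont{B}}) \;\longrightarrow\; \infty
\]
by the very conclusion of Lemma~\ref{Lgallerytoinfinity}. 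This immediately contradicts your uniform bound $d(\alcovefont{A}_n, \alcovefont{B}_n) \leq \ell(w)$. There is no wall-tracking, no bookkeeping, and no ``hardest work'' left; the paper finishes in one line once this is noticed.

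One smaller logical point: your sentence ``Suppose no step is ever length-increasing. Then at every $n$ the wall $\alcovefont{B}_n \vert \alcovefont{B}_{n+1}$ separates $\alcovefont{B}_n$ from $\alcovefont{A}_n$'' is not a free-standing implication. Deducing that the $\alcovefont{B}$-wall separates requires first knowing that the $\alcovefont{A}$-wall $H_n$ does \emph{not} separate $\alcovefont{A}_n$ from $\alcovefont{B}_n$, which is Umbrella Property~(\ref{UMBsamehalfspace}) and only becomes available after the Induction Lemma has already fired at the previous step. The paper handles this by genuine induction: the umbrella at stage $n$ forces $d(\alcovefont{A}_{n+1},\alcovefont{B}_n)=d(\alcovefont{A}_n,\alcovefont{B}_n)+1$; failure of the Direct Diamond Property at stage $n$ then forces the $\alcovefont{B}$-wall to separate, which is hypothesis~(\ref{ENUMcontradictionhypoth}); and the Induction Lemma outputs the umbrella at stage $n+1$. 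You clearly have this picture in mind, but the dependency should be written as an interleaved induction rather than as two separate global claims.
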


\begin{remark}
View (sequentially!) Figures \ref{example-g2-inductionlemma00} to \ref{example-g2-inductionlemma04} for a picture of the use of Induction Lemma in this proof.
\end{remark}

\begin{proof}
Let $ \alcovefont{B} $ be the $ w $-marked alcove $ w ( \basealcove ) $. By Lemma \ref{Lequivalentdiamonds}, it suffices to show that $ \{ \basealcove, \alcovefont{B} \} $ has the Diamond Property. By Lemma \ref{Ldoublechamberlemma}, either $ \basechamber \cap \weylchambersymbol_{\alcovefont{B}} = \emptyset $ or $ \basechamber \subset \weylchambersymbol_{\alcovefont{B}} $. If $ \basechamber \subset \weylchambersymbol_{\alcovefont{B}} $ then the claim follows from Proposition \ref{PdominantnontranslationshaveDCP}. (using origin $ \vertexsymbol_{\alcovefont{B}} $ and dominant chamber $ \weylchambersymbol_{\alcovefont{B}} $; see Remark \ref{Rindependenceofchamber}). So, assume that $ \basechamber \cap \weylchambersymbol_{\alcovefont{B}} = \emptyset $.

Applying Lemma \ref{Lgallerytoinfinity} to the chambers $ \basechamber $ and $ \weylchambersymbol_{\alcovefont{B}} $ yields a certain infinite minimal gallery $ ( \alcovefont{A}_0 = \basealcove, \alcovefont{A}_1, \alcovefont{A}_2, \ldots ) $ within $ \basechamber $. As usual, give $ \basealcove $ the base marking and let $ ( s_0, s_1, s_2, \ldots ) $ be the infinite sequence in $ \Delta_{\aff} $ such that $ \alcovefont{A}_1 = \alcovefont{A}_0^{s_0} $, $ \alcovefont{A}_2 = \alcovefont{A}_1^{s_1} $, etc. Let $ H_i $ be the wall of the marked alcove $ \alcovefont{A}_i $ labeled by $ s_i $.

Similarly, use the sequence $ ( s_0, s_1, s_2, \ldots ) $ to define, relative to the prescribed labeling of $ \alcovefont{B} $, a corresponding infinite gallery:
\begin{equation*}
( \alcovefont{B}_0, \alcovefont{B}_1, \alcovefont{B}_2, \ldots ) \defeq ( \alcovefont{B}, \alcovefont{B}^{s_0}, ( \alcovefont{B}^{s_0} )^{s_1}, \ldots )
\end{equation*}
As before, each $ \alcovefont{B}_i $ here represents a \emph{marked} alcove. Note that by definition of the labeling $ \labelingsymbol_{\alcovefont{B}} = \labelingsymbol_w $, the gallery $ ( \alcovefont{B}_0, \alcovefont{B}_1, \alcovefont{B}_2, \ldots ) $ is simply the image under $ w $ of the gallery $ ( \alcovefont{A}_0, \alcovefont{A}_1, \alcovefont{A}_2, \ldots ) $. In particular, $ ( \alcovefont{B}_0, \alcovefont{B}_1, \alcovefont{B}_2, \ldots ) $ is \emph{minimal} and $ \alcovefont{B}_i \subset \weylchambersymbol_{ \alcovefont{B} } $ for all $ i $.

Because the gallery $ ( \alcovefont{A}_0, \alcovefont{A}_1, \alcovefont{A}_2, \ldots ) $ starts at $ \basealcove $ and is contained completely within $ \basechamber $, necessarily $ s_0 = s_{\aff} $. Because of this and the hypothesis on $ \vertexsymbol_{ \alcovefont{B} } $, Lemma \ref{Lnegativechambercontainment} says that alcoves $ \alcovefont{B}_0 $ and $ \alcovefont{A}_0 $ are on the \emph{same} side of $ H_0 $, i.e. $ d ( \alcovefont{B}_0, \alcovefont{A}_1 ) = d ( \alcovefont{B}_0, \alcovefont{A}_0 ) + 1 $.

Let $ K \defeq \labelingsymbol_{\alcovefont{B}_0} ( s_0 ) $ be the wall of $ \alcovefont{B}_0 $ labeled by $ s_0 $. If $ \alcovefont{B}_0 $ and $ \alcovefont{A}_1 $ are on the \emph{same} side of $ K $ then necessarily $ K \neq H_0 $ (because $ H_0 $ \emph{separates} $ \alcovefont{B}_0 $ from $ \alcovefont{A}_1 $) and both $ \alcovefont{B}_0 $ and $ \alcovefont{A}_0 $ are on the \emph{same} side of $ K $ (because $ H_0 $ is the unique hyperplane separating $ \alcovefont{A}_0 $ from $ \alcovefont{A}_1 $ and $ H_0 \neq K $). It is then immediate from the definition that $ \{ \alcovefont{A}_0, \alcovefont{B}_0 \} = \{ \basealcove, \alcovefont{B} \} $ has the Direct Diamond Property (realized by $ s_0 $). Otherwise, $ K $ \emph{separates} $ \alcovefont{B}_0 $ from $ \alcovefont{A}_1 $ and by Lemma \ref{Lgeometriclength}, $ d ( \alcovefont{B}_1, \alcovefont{A}_1 ) = d ( \alcovefont{B}_0, \alcovefont{A}_1 ) - 1 = d ( \alcovefont{B}_0, \alcovefont{A}_0 ) + 1 - 1 = d ( \alcovefont{B}_0, \alcovefont{A}_0 ) $, i.e. $ ( \alcovefont{A}_1, \alcovefont{B}_1 ) $ is laterally-conjugate to $ ( \basealcove, \alcovefont{B} ) $ via $ s_0 $.

In these circumstances, Induction Lemma implies that $ (  ( \alcovefont{B}_0, \alcovefont{B}_1 ), \alcovefont{A}_1, H_1 ) $ is an Umbrella:
\begin{itemize}
\item the non-numbered hypotheses of Induction Lemma are true by choice,

\item hypothesis (\ref{ENUMinductionhypoth}) is true because $ ( ( \alcovefont{B}_0 ), \alcovefont{A}_0, H_0 ) $ is trivially an Umbrella,

\item hypothesis (\ref{ENUMvertexinchamber}) is true by hypothesis on $ w $,

\item hypothesis (\ref{ENUMawayfrombase}) is true by choice of $ H_1 $ because $ ( \alcovefont{A}_0, \alcovefont{A}_1, \alcovefont{A}_2, \ldots ) $ is \emph{minimal},

\item hypothesis (\ref{ENUMnotbasewall}) is true by choice of $ H_1 $ because $ \alcovefont{A}_1, \alcovefont{A}_2 \subset \basechamber $, and

\item hypothesis (\ref{ENUMcontradictionhypoth}) is true by the assumption that $ \{ \alcovefont{A}_0, \alcovefont{B}_0 \} $ did \emph{not} have the Direct Diamond Property for $ s_0 $ (see previous paragraph: by choice $ K = \alcovefont{B}_0 \vert \alcovefont{B}_1 $).
\end{itemize}

So, the triple $ ( ( \alcovefont{B}_0, \alcovefont{B}_1 ), \alcovefont{A}_1, H_1 ) $ is an Umbrella by Induction Lemma. In particular, $ d (  \alcovefont{B}_1, \alcovefont{A}_2 ) = d ( \alcovefont{B}_1, \alcovefont{A}_1 ) + 1 $ by Umbrella Property (\ref{UMBsamehalfspace}).

I now iterate this process.

Let $ K \defeq \labelingsymbol_{\alcovefont{B}_1} ( s_1 ) $ be the wall of the marked alcove $ \alcovefont{B}_1 $ labeled by $ s_1 $. If $ \alcovefont{B}_1 $ and $ \alcovefont{A}_2 $ are on the \emph{same} side of $ K $ then $ K \neq H_1 $ (because $ H_1 $ \emph{separates} $ \alcovefont{B}_1 $ from $ \alcovefont{A}_2 $) and both $ \alcovefont{B}_1 $ and $ \alcovefont{A}_1 $ are on the \emph{same} side of $ K $ (because $ H_1 $ is the unique hyperplane separating $ \alcovefont{A}_1 $ from $ \alcovefont{A}_2 $ and $ H_1 \neq K $). It is then immediate from the definition that $ \{ \alcovefont{A}_1, \alcovefont{B}_1 \} $ has the Direct Diamond Property realized by $ s_1 $ and therefore $ \{ \basealcove, \alcovefont{B} \} $, being laterally-conjugate to it, has the Diamond Property. Otherwise, $ K $ separates $ \alcovefont{B}_1 $ from $ \alcovefont{A}_2 $ and by Lemma \ref{Lgeometriclength}, $ d (  \alcovefont{B}_2, \alcovefont{A}_2 ) = d (  \alcovefont{B}_1, \alcovefont{A}_2 ) - 1 = d ( \alcovefont{B}_1, \alcovefont{A}_1 ) + 1 - 1 = d ( \alcovefont{B}_1, \alcovefont{A}_1 ) $, i.e. $ ( \alcovefont{A}_2, \alcovefont{B}_2 ) $ is laterally-conjugate to $ ( \alcovefont{A}_1, \alcovefont{B}_1 ) $ via $ s_1 $, and therefore also laterally-conjugate to $ ( \basealcove, \alcovefont{B} ) $.

In these circumstances, Induction Lemma implies that $ ( ( \alcovefont{B}_0, \alcovefont{B}_1, \alcovefont{B}_2 ), \alcovefont{A}_2, H_2 ) $ is an Umbrella:
\begin{itemize}
\item the non-numbered hypotheses are again true by choice, the status of hypothesis (\ref{ENUMvertexinchamber}) has not changed, and hypothesis (\ref{ENUMawayfrombase}) is true by choice of $ H_2 $ for the same reason as before,

\item hypothesis (\ref{ENUMinductionhypoth}) is known by the previous iteration,

\item hypothesis (\ref{ENUMnotbasewall}) is true by choice of $ H_2 $ because $ \alcovefont{A}_2, \alcovefont{A}_3 \subset \basechamber $, and

\item hypothesis (\ref{ENUMcontradictionhypoth}) is supplied by the assumption that $ \{ \alcovefont{A}_1, \alcovefont{B}_1 \} $ did \emph{not} have the Direct Diamond Property for $ s_1 $ (see previous paragraph: by choice $ K = \alcovefont{B}_1 \vert \alcovefont{B}_2 $).
\end{itemize}

The above induction shows that if $ n \in \N $ and $ \{ \alcovefont{A}_i, \alcovefont{B}_i \} $ does not have the Direct Diamond Property for all $ i \leq n $ then $ ( \alcovefont{A}_i, \alcovefont{B}_i ) $ is laterally-conjugate to $ ( \basealcove, \alcovefont{B} ) $ for all $ i \leq n $, and in particular, $ d ( \alcovefont{A}_i, \alcovefont{B}_i ) = ( \basealcove, \alcovefont{B} ) = \ell ( w ) $ for all $ i \leq n $. But Lemma \ref{Lgallerytoinfinity} says that $ d ( \alcovefont{A}_i, \alcovefont{B}_i ) \rightarrow \infty $, so there must exist $ i \in \N $ such that $ \{ \alcovefont{A}_i, \alcovefont{B}_i \} $ has the Direct Diamond Property. If $ i \in \N $ is the smallest such index then $ ( \basealcove, \alcovefont{B} ) $ is laterally-conjugate to a pair with the Direct Diamond Property, as desired.
\end{proof}

\begin{remark} \label{Rindependenceofchamber2}
Similar to the ``dominant case'' Proposition \ref{PdominantnontranslationshaveDCP}, the choices of vertex, alcove, and chamber are notationally convenient but otherwise totally unimportant to the conclusion of Proposition \ref{PantidominantnontranslationshaveCP}. In other words, if $ \alcovefont{A} $ and $ \alcovefont{B} $ are NT-compatible marked alcoves and $ \vertexsymbol_{\alcovefont{B}} \in \closedweylchambersymbol_{\alcovefont{A}}^{\opp} $ then $ \{ \alcovefont{A}, \alcovefont{B} \} $ has the Diamond Property.
\end{remark}

\begin{remark}
In type A, there is very simple proof that if $ w ( \basealcove ) \subset \basechamber^{\opp} $ then $ w $ has the Diamond Property using at most one lateral-conjugation. Because of the extreme symmetry of type A, the inverse image $ s_{\aff}^{-1} ( \basechamber ) $ consists of $ \basechamber^{\opp} $ together with \emph{all} alcoves $ \alcovefont{B} $ such that $ \overline{\alcovefont{B}} \cap \overline{\weylchambersymbol}^{\opp} \neq \emptyset $. Because of this, if $ w $ does not already have the Direct Diamond Property, $ s_{\aff} $ laterally conjugates $ w $ into the dominant chamber, in which case Proposition \ref{PdominantnontranslationshaveDCP} applies. Of course, this idea fails in (almost?) every other type.
\end{remark}

\subsection{Case: the intermediate chambers} \label{SSintermediatecase}

I now prove that the general case can, at worst, be reduced to the anti-dominant case, Proposition \ref{PantidominantnontranslationshaveCP}:
\begin{maintheorem}
Suppose $ \alcovefont{A} $ and $ \alcovefont{B} $ are marked alcoves.

If $ \alcovefont{A} $ and $ \alcovefont{B} $ are NT-compatible then $ \{ \alcovefont{A}, \alcovefont{B} \} $ has the Diamond Property.

In particular, if $ w \in \widetilde{W} $ and $ w \notin \translationsubgroup $ then $ w $ has the Diamond Property.
\end{maintheorem}

\begin{remark}
View (sequentially!) Figures \ref{example-g2-2ndinductionlemma-00} to \ref{example-g2-2ndinductionlemma-01} for a picture of the iteration used in this proof.
\end{remark}

\begin{proof}
Let $ \mathcal{S} $ be the set of all $ s \in \Delta_{\aff} $ such that if $ H \defeq \labelingsymbol_{\alcovefont{A}} ( s ) $ then the following three properties are true simultaneously: $ \vertexsymbol_{\alcovefont{A}} \in H $, both $ \alcovefont{A} $ and $ \alcovefont{B} $ are on the \emph{same} side of $ H $, and $ \vertexsymbol_{\alcovefont{B}} \notin H $. If $ \mathcal{S} = \emptyset $ then by definition for every wall $ H $ of $ \alcovefont{A} $ containing $ \vertexsymbol_{\alcovefont{A}} $ either $ H $ \emph{separates} $ \alcovefont{A} $ from $ \alcovefont{B} $ or $ \vertexsymbol_{\alcovefont{B}} \in H $. In this case, $ \vertexsymbol_{\alcovefont{B}} \in \overline{\weylchambersymbol}_{\alcovefont{A}}^{\opp} $ and Proposition \ref{PantidominantnontranslationshaveCP} applies. So assume that $ \mathcal{S} \neq \emptyset $ and let $ s \in \mathcal{S} $ be arbitrary.

Let $ ( \alcovefont{B}_0, \ldots, \alcovefont{B}_d ) $ be a gallery realizing the distance $ d ( \vertexsymbol_{\alcovefont{B}}, \weylchambersymbol_{\alcovefont{A}}^{\opp} ) $ (see \S\ref{SSgalleries} for this notion of distance). By definition, this means that $ \vertexsymbol_{\alcovefont{B}} \in \overline{\alcovefont{B}}_0 $, the gallery is minimal, and $ \alcovefont{B}_d \subset \weylchambersymbol_{\alcovefont{A}}^{\opp} $ (note that $ \alcovefont{B}_0 \neq \alcovefont{B} $ is possible). Set $ H \defeq \labelingsymbol_{\alcovefont{A}} ( s ) $. By definition of $ \mathcal{S} $, $ H $ separates $ \alcovefont{B} $ from every alcove in $ \weylchambersymbol_{\alcovefont{A}}^{\opp} $ and $ \vertexsymbol_{\alcovefont{B}} \notin H $ so $ H $ also separates $ \alcovefont{B}_0 $ from every alcove in $ \weylchambersymbol_{\alcovefont{A}}^{\opp} $. Therefore, $ H $ must be an intermediate wall of the gallery $ ( \alcovefont{B}_0, \ldots, \alcovefont{B}_d ) $ (see Lemma 5.1.5 of \cite{BGW}). Let $ 0 \leq j < d $ be the index such that $ H = \alcovefont{B}_j \vert \alcovefont{B}_{j+1} $. Then the sequence of alcoves $ ( \alcovefont{B}_0, \ldots, \alcovefont{B}_j, s_H ( \alcovefont{B}_{j+2} ), \ldots, s_H ( \alcovefont{B}_d ) ) $ is a \emph{gallery}. This is obviously a gallery ``from $ \vertexsymbol_{\alcovefont{B}} $ to $ \weylchambersymbol_{\alcovefont{A}^s}^{\opp} $'' and has fewer than $ d $ alcoves. By choice of $ s $ and the compatibility hypothesis, $ \vertexsymbol_{\alcovefont{B}^s} = \vertexsymbol_{\alcovefont{B}} $. Altogether, $ d ( \vertexsymbol_{\alcovefont{B}^s}, \weylchambersymbol_{\alcovefont{A}^s}^{\opp} ) < d ( \vertexsymbol_{\alcovefont{B}}, \weylchambersymbol_{\alcovefont{A}}^{\opp} ) $.

On the other hand, by choice of $ s $, Lemma \ref{Lgeometriclength} implies that $ d ( \alcovefont{A}^s, \alcovefont{B}^s ) \geq d ( \alcovefont{A}, \alcovefont{B} ) $.
Since these distances $ d ( \vertexsymbol_{*}, \weylchambersymbol_{*}^{\opp} ) $ are $ \N $-valued, this means that one may iterate the previous process until a pair of alcoves $ ( \alcovefont{A}^{\prime}, \alcovefont{B}^{\prime} ) $ is constructed which is laterally-conjugate to $ ( \alcovefont{A}, \alcovefont{B} ) $ and such that either $ ( \alcovefont{A}^{\prime}, \alcovefont{B}^{\prime} ) $ has the Direct Diamond Property or $ \vertexsymbol_{\alcovefont{B}^{\prime}} \in \overline{\weylchambersymbol}_{\alcovefont{A}^{\prime}}^{\opp} $, in which case Proposition \ref{PantidominantnontranslationshaveCP} applies.
\end{proof}

\begin{remark}
If $ w \in \finiteweylgroup \subset \widetilde{W} $ then $ w \notin \Lambda $ if and only if $ w \neq 1 $, and it is easy to show that both $ \ell ( w s_{\aff} ) > \ell ( w ) $ and $ \ell ( s_{\aff} w ) > \ell ( w ) $ directly: combine the Exchange Property of the Coxeter group $ ( W_{\aff}, \Delta_{\aff} ) $ with the fact that all reduced expressions for a single element must use the \emph{same} subset of $ \Delta_{\aff} $ (see Proposition 7 in Ch IV \S1 no. 8 of \cite{bourbaki}) to conclude that neither length can \emph{decrease}. It is tempting to think that such $ w $ always have the Direct Diamond Property realized by $ s_{\aff} $ but this is not always true: in the affine Weyl group $ \Caff{2} $ (or $ \Gaff{2} $), there exists $ s \in \basereflections $ such that $ s \cdot s_{\aff} = s_{\aff} \cdot s $, so $ s_{\aff} w s_{\aff} = w $ for $ w \defeq s \in \finiteweylgroup $.
\end{remark}

\section{Application to Hecke Algebras} \label{Sapplicationtoheckealgebras}

\subsection{Hecke algebras on quasi-Coxeter groups} \label{SSdefofheckealgebra}

Fix a function $ q : \Delta_{\aff} \rightarrow \N $ which is invariant under conjugation by $ \widetilde{W} $.

In the rest of this section \S\ref{Sapplicationtoheckealgebras}, I assume given a $ \C $-algebra $ \heckefont{H} $ which, as a $ \C $-vector space, has a basis of elements $ \heckefont{T}_w $ indexed by all $ w \in \widetilde{W} $. Further, denoting the ring operation by $ * $, I assume that the following \emph{Iwahori-Matsumoto identities} are true in $ \heckefont{H} $: for all $ w \in \widetilde{W} $ and $ s \in \Delta_{\aff} $,
\begin{align*}
\heckefont{T}_s * \heckefont{T}_w &=
\begin{cases}
\heckefont{T}_{sw} &\text{ if } \ell ( s w ) > \ell ( w ) \\
( q(s) - 1 ) \heckefont{T}_w + q(s) \heckefont{T}_{sw} &\text{ if } \ell ( s w ) < \ell ( w )
\end{cases} \text{ (left-handed)} \\
\heckefont{T}_w * \heckefont{T}_s &=
\begin{cases}
\heckefont{T}_{ws} &\text{ if } \ell ( w s ) > \ell ( w ) \\
( q(s) - 1 ) \heckefont{T}_w + q(s) \heckefont{T}_{ws} &\text{ if } \ell ( w s ) < \ell ( w )
\end{cases} \text{ (right-handed)}
\end{align*}
Note that because of the way that the length function $ \ell $ was extended to $ \widetilde{W} $, if $ \tau \in \Omega $ and $ w \in \widetilde{W} $ then $ \heckefont{T}_{w \tau} = \heckefont{T}_w * \heckefont{T}_{\tau} $. If $ h \in \heckefont{H} $ then denote by $ h_w $ the coefficient of $ \heckefont{T}_w $ in the linear combination of $ h $ with respect to this basis. If $ h \in \heckefont{H} $ and $ h_w \neq 0 $ then $ w $ is said to \emph{support} $ h $.

\begin{remark}
It is not difficult to show, and I do so in the article \cite{roro} using ingredients from the Appendix to \cite{PRH}, that any Iwahori-Hecke algebra $ \heckefont{H} $ of any connected reductive affine algebraic $ F $-group is of the form described above. Therefore, the results of this section \S\ref{Sapplicationtoheckealgebras} apply to Iwahori-Hecke algebras. If greater generality is desired, one can use a pair $ a, b : \Delta_{\aff} \rightarrow \C $ of parameter systems and a ``generic algebra'' as in \S7.1 of \cite{humphreys}.
\end{remark}

I will need the following slight extension of a well-known property of Coxeter groups:
\begin{lemma} \label{Lhumphreys}
Fix $ w \in \widetilde{W} $ and $ s, t \in \Delta_{\aff} $.

If $ \ell ( s w t ) = \ell ( w ) $ and $ \ell ( s w ) = \ell ( w t ) $ then $ s w t = w $.
\end{lemma}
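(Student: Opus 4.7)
The plan is to use the $\Omega$-action on $\Delta_{\aff}$ to reduce the statement to the standard Coxeter-group version of the lemma inside $W_{\aff}$, and then finish that version with one application of the exchange condition to a carefully chosen reduced expression.

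First I would write $w = w_0 \tau$ with $w_0 \in W_{\aff}$ and $\tau \in \Omega$. By \QCGlabel{1}, conjugation by $\tau$ permutes $\Delta_{\aff}$, so $t^{\prime} \defeq \tau t \tau^{-1}$ lies in $\Delta_{\aff}$. Then $wt = w_0 t^{\prime} \tau$ and $swt = s w_0 t^{\prime} \tau$, and since $\ell$ factors through $\widetilde{W} \twoheadrightarrow W_{\aff}$ the four length hypotheses translate verbatim into $\ell(s w_0 t^{\prime}) = \ell(w_0)$ and $\ell(s w_0) = \ell(w_0 t^{\prime})$, while the desired conclusion $swt = w$ becomes $s w_0 t^{\prime} = w_0$. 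So I am reduced to the standard Coxeter claim: in $(W_{\aff}, \Delta_{\aff})$, if $s, t^{\prime} \in \Delta_{\aff}$ and $w_0 \in W_{\aff}$ satisfy those same two length equations, then $s w_0 t^{\prime} = w_0$.

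For the Coxeter statement, since $\ell(s w_0)$ and $\ell(w_0 t^{\prime})$ each differ from $\ell(w_0)$ by exactly $\pm 1$, their equality forces both to be $\ell(w_0) + 1$ or both to be $\ell(w_0) - 1$. In the ``$+$'' case, I would fix a reduced expression $w_0 = s_1 \cdots s_k$, so that $s w_0 = s \cdot s_1 \cdots s_k$ is reduced of length $k+1$; the hypothesis gives $\ell((s w_0) t^{\prime}) = k < k+1$, so the right-handed exchange condition produces some $0 \leq i \leq k$ (with $s$ viewed as the $0$-th letter) such that $s w_0 t^{\prime}$ is obtained by omitting the $i$-th letter. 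If $i = 0$ the deletion yields $s_1 \cdots s_k = w_0$ directly, whereas if $i \geq 1$ left-multiplying by $s$ exhibits $w_0 t^{\prime}$ as a word of length $k - 1$, contradicting $\ell(w_0 t^{\prime}) = k+1$. The ``$-$'' case is symmetric: take a reduced expression $s w_0 = r_1 \cdots r_{k-1}$, prepend $s$ to obtain the reduced expression $w_0 = s \cdot r_1 \cdots r_{k-1}$, and apply the right-handed exchange condition to $w_0 \cdot t^{\prime}$; deleting any $r_i$ with $i \geq 1$ would force $\ell(s w_0 t^{\prime}) \leq k - 2$, contradicting $\ell(s w_0 t^{\prime}) = k$, so the leading $s$ must be the letter deleted, which gives $w_0 t^{\prime} = s w_0$ and hence $s w_0 t^{\prime} = w_0$.

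The hard part is really just recognizing that the hypotheses force both ``sign patterns'' to be aligned and then using a single reduced expression per case; the $\Omega$-reduction is pure bookkeeping via \QCGlabel{1}. I do not foresee any obstacle beyond running the two sign cases of the exchange condition carefully.
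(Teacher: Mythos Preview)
Your proposal is correct and follows the same approach as the paper: reduce to the pure Coxeter case in $W_{\aff}$ by writing $w = w_0\tau$ and using that $\Omega$ permutes $\Delta_{\aff}$ (\QCGlabel{1}) and that $\ell$ factors through $W_{\aff}$. The only difference is that the paper simply cites the Lemma in \S7.2 of \cite{humphreys} for the Coxeter statement, whereas you supply a direct proof of that statement via the exchange condition; your two sign cases are handled correctly.
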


\begin{proof}
When $ \Omega = \{ 1 \} $, this is exactly Lemma in \S7.2 of \cite{humphreys}. The general case follows immediately from this since $ \Omega $ permutes $ \Delta_{\aff} $ and $ \ell $ factors through $ W_{\aff} $.
\end{proof}

\subsection{Equations defining the center} \label{SScentralityequations}

Denote by $ Z ( \heckefont{H} ) $ the center of the ring $ \heckefont{H} $.

Fix $ h \in \heckefont{H} $. It is clear from the Iwahori-Matsumoto relations that $ h \in Z ( \heckefont{H} ) $ if and only if $ h * \heckefont{T}_s = \heckefont{T}_s * h $ and $ h * \heckefont{T}_{\tau} = \heckefont{T}_{\tau} * h $ for all $ s \in \Delta_{\aff} $ and $ \tau \in \Omega $.

Fix $ s \in \Delta_{\aff} $. For each $ x \in \widetilde{W} $, one can use the left-handed Iwahori-Matsumoto relation to compute that the coefficient of $ \heckefont{T}_x $ in $ \heckefont{T}_s * h $ is
\begin{align*}
q(s) h_{ s x } &\text{ if } \ell ( s x ) > \ell ( x ) \\
h_{ s x } + ( q(s) - 1 ) h_x &\text{ if } \ell ( s x ) < \ell ( x )
\end{align*}

Similarly, one can use the right-handed Iwahori-Matsumoto relation to compute that the coefficient of $ \heckefont{T}_x $ in $ h * \heckefont{T}_s $ is
\begin{align*}
q(s) h_{ x s } &\text{ if } \ell ( x s ) > \ell ( x ) \\
h_{ x s } + ( q(s) - 1 ) h_x &\text{ if } \ell ( x s ) < \ell ( x )
\end{align*}

It is obvious from the Iwahori-Matsumoto identities that $ h * \heckefont{T}_{\tau} = \heckefont{T}_{\tau} * h $ if and only if $ h_{x \tau} = h_{\tau x} $ for all $ x \in \widetilde{W} $.

It follows that the center $ Z ( \heckefont{H} ) $ is the $ \C $-subspace of vectors $ h \in \heckefont{H} $ whose Iwahori-Matsumoto coefficients $ h_x $ solve the (infinite) linear system consisting of the equation $ h_{x \tau} = h_{\tau x} $ for each pair $ ( x, \tau ) \in \widetilde{W} \times \Omega $ together with the appropriate equation from
\begin{align}
\nonumber q(s) h_{ s x } = q(s) h_{ x s } &\text{ if } \ell ( s x ), \ell ( x s ) > \ell ( x ) \\
\label{Ecenterequations_us_lowest} q(s) h_{ s x } = h_{ x s } + ( q(s) - 1 ) h_x &\text{ if } \ell ( s x ) > \ell ( x ) > \ell ( x s ) \\
\nonumber h_{ s x } + ( q(s) - 1 ) h_x = q(s) h_{ x s } &\text{ if } \ell ( s x ) < \ell ( x ) < \ell ( x s ) \\
\label{Ecenterequations_bothdecrease} h_{ s x } = h_{ x s } &\text{ if } \ell ( s x ), \ell ( x s ) < \ell ( x )
\end{align}
for each pair $ ( x, s ) \in \widetilde{W} \times \Delta_{\aff} $.

\begin{remark}
These equations appeared already in \S3 of \cite{haines} for affine Hecke algebras on reduced root data.
\end{remark}

\subsection{Length-filtration and dimensions} \label{SSdimensionbounds}

Recall that $ \Omega ( w ) $ denotes the projection of $ w \in \widetilde{W} $ into $ \Omega $, and that the set of $ \finiteweylgroup $-conjugacy classes in $ \translationsubgroup $ is denoted by $ \translationsubgroup / \finiteweylgroup $.

\begin{defn}
Fix $ L \in \N $ and $ \tau \in \Omega $.

Define $ Z_{L,\tau} ( \heckefont{H} ) $ to be the set of all $ z \in Z ( \heckefont{H} ) $ such that $ z_w = 0 $ if either $ \ell ( w ) > L $ or $ \Omega ( w ) \neq \tau $.
\end{defn}
Note that each $ Z_{L,\tau} ( \heckefont{H} ) $ is a \emph{finite}-dimensional $ \C $-subspace of $ Z ( \heckefont{H} ) $ and that $ Z ( \heckefont{H} ) $ is the \emph{union} of all $ Z_{L,\tau} ( \heckefont{H} ) $.

Recall that if $ \mathcal{O} \in \translationsubgroup / \finiteweylgroup $ then $ \ell $ is constant on $ \mathcal{O} $ and define $ \ell ( \mathcal{O} ) $ to be this constant length. It follows that any two $ t, t^{\prime} \in \mathcal{O} $ are \emph{laterally-conjugate}.

\begin{defn}
Fix $ L \in \N $ and $ \tau \in \Omega $.

Define $ N_{L, \tau} $ to be the total number of conjugacy classes $ \mathcal{O} \in \translationsubgroup / \finiteweylgroup $ such that $ \ell ( \mathcal{O} ) \leq L $ and $ \Omega ( t ) = \tau $ for all $ t \in \mathcal{O} $.
\end{defn}

The following two lemmas show how lateral-conjugacy and the diamond property are related to centers of Hecke algebras:

\begin{lemma} \label{Llateralconjugacyimpliesdependence}
Suppose $ z \in Z ( \heckefont{H} ) $.

If $ w \in \widetilde{W} $ is laterally-conjugate to $ w^{\prime} $ then $ z_w = z_{w^{\prime}} $.
\end{lemma}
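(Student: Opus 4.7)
The plan is to reduce to the one-step claim: \emph{if $s \in \Delta_{\aff}$ and $\ell(sws) = \ell(w)$ then $z_{sws} = z_w$.} Once this is established, the full statement follows by induction on the length of the sequence $s_1, \ldots, s_n$ witnessing lateral-conjugacy. Indeed, setting $w_0 \defeq w$ and $w_i \defeq s_i w_{i-1} s_i$, the defining condition $\ell(s_i \cdots s_1 w s_1 \cdots s_i) = \ell(w)$ precisely says $\ell(s_i w_{i-1} s_i) = \ell(w_{i-1})$, so the one-step claim gives $z_{w_i} = z_{w_{i-1}}$ and the telescope yields $z_{w'} = z_{w_n} = z_{w_0} = z_w$.

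For the one-step claim, if $sws = w$ the conclusion is trivial, so assume $sws \neq w$. Then the contrapositive of Lemma \ref{Lhumphreys} (applied with $t = s$) forces $\ell(sw) \neq \ell(ws)$. Since both $\ell(sw)$ and $\ell(ws)$ differ from $\ell(w)$ by exactly one, one of them equals $\ell(w) + 1$ and the other equals $\ell(w) - 1$. After swapping the roles of $w$ and $sws$ if necessary (the hypothesis $\ell(sws) = \ell(w)$ is symmetric in this swap), I may assume
\[
\ell(sw) = \ell(w) + 1 > \ell(w) > \ell(w) - 1 = \ell(ws).
\]

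Now I apply the centrality equations of \S\ref{SScentralityequations} at the element $x \defeq sw$. Then $sx = w$, $xs = sws$, and $\ell(x) = \ell(w) + 1$, while $\ell(sx) = \ell(w) < \ell(x)$ and $\ell(xs) = \ell(sws) = \ell(w) < \ell(x)$. Thus $x$ falls in the regime of equation (\ref{Ecenterequations_bothdecrease}), which reads $z_{sx} = z_{xs}$, i.e., $z_w = z_{sws}$, completing the one-step claim.

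The main obstacle is really just bookkeeping: one must handle the trivial subcase $sws = w$ separately (since Lemma \ref{Lhumphreys} gives no information there), and in the nontrivial subcase one must choose to feed $x = sw$ rather than $x = w$ into the centrality equations so as to land in the clean ``both lengths decrease'' regime where the $q(s)$-weighted and constant terms cancel, leaving the direct identity $h_{sx} = h_{xs}$.
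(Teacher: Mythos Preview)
Your proof is correct and follows essentially the same approach as the paper: both reduce to the one-step claim and then feed an appropriate neighbor of $w$ into centrality equation (\ref{Ecenterequations_bothdecrease}). The only cosmetic differences are that you handle the trivial subcase $sws = w$ explicitly (the paper silently skips it) and you use a symmetry/WLOG to reduce to a single case, whereas the paper treats the two cases by choosing $x = sw$ or $x = ws$ accordingly.
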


\begin{proof}
If $ s \in \Delta_{\aff} $ is such that $ \ell ( s w s ) = \ell ( w ) $ then either $ \ell ( s w ) < \ell ( w ) < \ell ( w s ) $ or $ \ell ( w s ) < \ell ( w ) < \ell ( s w ) $. Choosing $ x \defeq w s $ in the former case and $ x \defeq s w $ in the latter case, centrality equation (\ref{Ecenterequations_bothdecrease}) implies that $ z_{w} = z_{s w s} $, and the claim follows immediately from this.
\end{proof}

\begin{lemma} \label{Ldiamondpropertyimpliesdependence}
Suppose $ z \in Z ( \heckefont{H} ) $.

If $ w \in \widetilde{W} $ has the Diamond Property then there exist $ u, v \in \widetilde{W} $ satisfying $ \ell ( u ) > \ell ( v ) > \ell ( w ) $ such that $ z_w $ is a $ \C $-linear combination of $ z_u $ and $ z_v $.
\end{lemma}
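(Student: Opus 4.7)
The plan is to first reduce to the Direct Diamond case via Lemma \ref{Llateralconjugacyimpliesdependence}, and then extract the required relation from a single centrality equation of \S\ref{SScentralityequations}. Explicitly, the Diamond Property yields a lateral conjugate $w^{\prime}$ of $w$ with the Direct Diamond Property, and Lemma \ref{Llateralconjugacyimpliesdependence} gives $z_w = z_{w^{\prime}}$; since lateral-conjugation also preserves length, it suffices to find $u, v \in \widetilde{W}$ satisfying $\ell(u) > \ell(v) > \ell(w^{\prime})$ such that $z_{w^{\prime}}$ is a $\C$-linear combination of $z_u$ and $z_v$. I may therefore assume outright that $w$ itself admits some $s \in \Delta_{\aff}$ with $sws \neq w$ and $\ell(sw) = \ell(ws) = \ell(w) + 1$.

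Next, I pin down $\ell(sws)$. Because $\ell(sw) = \ell(ws)$ and $sws \neq w$, Lemma \ref{Lhumphreys} rules out $\ell(sws) = \ell(w)$; since multiplying by a single simple reflection changes length by exactly $\pm 1$, $\ell(sws) \in \{ \ell(ws) - 1, \ell(ws) + 1 \} = \{ \ell(w), \ell(w) + 2 \}$, and hence $\ell(sws) = \ell(w) + 2$.

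Finally, I apply centrality equation (\ref{Ecenterequations_us_lowest}) with $x \defeq ws$. With that substitution $sx = sws$ and $xs = w$, and the length computation above gives $\ell(sx) = \ell(w) + 2 > \ell(x) = \ell(w) + 1 > \ell(xs) = \ell(w)$, matching the hypothesis of (\ref{Ecenterequations_us_lowest}). The equation then reads
\begin{equation*}
q(s) \, z_{sws} = z_w + ( q(s) - 1 ) \, z_{ws},
\end{equation*}
which rearranges to $z_w = q(s) \, z_{sws} - ( q(s) - 1 ) \, z_{ws}$, exhibiting $z_w$ as a $\C$-linear combination of $z_u$ and $z_v$ for $u \defeq sws$ and $v \defeq ws$, with $\ell(u) > \ell(v) > \ell(w)$. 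The only substantive step is the length computation via Lemma \ref{Lhumphreys}; everything else is bookkeeping and a direct appeal to one of the centrality equations.
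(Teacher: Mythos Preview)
Your proof is correct and follows essentially the same approach as the paper's: reduce to the Direct Diamond case via Lemma \ref{Llateralconjugacyimpliesdependence}, use Lemma \ref{Lhumphreys} to force $\ell(sws) = \ell(w) + 2$, and then read off the relation from centrality equation (\ref{Ecenterequations_us_lowest}) with $x = ws$. If anything, you are slightly more explicit than the paper in noting that lateral-conjugation preserves length (so that the inequalities $\ell(u) > \ell(v) > \ell(w)$, and not merely $\ell(u) > \ell(v) > \ell(w^{\prime})$, hold).
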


\begin{proof}
By Lemma \ref{Llateralconjugacyimpliesdependence}, I may assume that $ w $ has the \emph{Direct} Diamond Property. Let $ s \in \Delta_{\aff} $ be the element realizing the property. By basic Coxeter theory, $ \ell ( s w ) = \ell ( w ) + 1 = \ell ( w s ) $ and it is true that either $ \ell ( s w s ) = \ell ( w ) + 2 $ or $ \ell ( s w s ) = \ell ( w ) $. If it were true that $ \ell ( s w s ) = \ell ( w ) $ then by Lemma \ref{Lhumphreys} it would be true that $ s w s = w $, but this is explicitly prohibited by the Direct Diamond Property. Therefore, $ \ell ( s w s ) > \ell ( s w ) = \ell ( w s ) > \ell ( w ) $. Choosing $ x \defeq w s $ and applying centrality equation (\ref{Ecenterequations_us_lowest}) proves that $ z_w $ is a linear combination of $ z_{ws} $ and $ z_{sws} $, as desired.
\end{proof}

\begin{remark}
Materially, both lemmas \ref{Llateralconjugacyimpliesdependence} and \ref{Ldiamondpropertyimpliesdependence} appeared already as Lemma 3.1 of \cite{haines}.
\end{remark}

\begin{prop} \label{Pdimensionbounds}
Fix $ L \in \N $ and $ \tau \in \Omega $.
\begin{equation*}
\dim_{\C} ( Z_{L,\tau} ( \heckefont{H} ) ) \leq N_{L,\tau}
\end{equation*}
\end{prop}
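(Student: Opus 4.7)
The plan is to construct an injective linear map from $ Z_{L,\tau} ( \heckefont{H} ) $ into $ \C^{N_{L,\tau}} $. Concretely, for each conjugacy class $ \mathcal{O} \in \translationsubgroup / \finiteweylgroup $ satisfying $ \ell ( \mathcal{O} ) \leq L $ and $ \Omega ( t ) = \tau $ for $ t \in \mathcal{O} $, I fix a representative $ t_{\mathcal{O}} \in \mathcal{O} $, and consider the evaluation map $ \phi : Z_{L,\tau} ( \heckefont{H} ) \rightarrow \C^{N_{L,\tau}} $ sending $ z $ to the tuple $ ( z_{t_{\mathcal{O}}} )_{\mathcal{O}} $. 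The inequality follows immediately once I show that $ \phi $ is injective.

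To prove injectivity, I assume $ z \in Z_{L,\tau} ( \heckefont{H} ) $ satisfies $ z_{t_{\mathcal{O}}} = 0 $ for every such $ \mathcal{O} $, and I claim by \emph{downward} induction on the length $ k $ that $ z_w = 0 $ for every $ w \in \widetilde{W} $ with $ \ell ( w ) = k \leq L $ and $ \Omega ( w ) = \tau $. Coefficients outside this range vanish automatically since $ z \in Z_{L,\tau} ( \heckefont{H} ) $. At length $ k $, I split into two cases. If $ w \in \translationsubgroup $, then $ w $ lies in a class $ \mathcal{O} $ contributing to $ N_{L,\tau} $, and writing an element of $ \finiteweylgroup $ conjugating $ t_{\mathcal{O}} $ to $ w $ as a word in $ \basereflections \subset \Delta_{\aff} $ exhibits $ w $ and $ t_{\mathcal{O}} $ as laterally-conjugate (using hypothesis \QCGlabel{3} to verify that intermediate lengths stay equal to $ \ell ( \mathcal{O} ) $); Lemma \ref{Llateralconjugacyimpliesdependence} then gives $ z_w = z_{t_{\mathcal{O}}} = 0 $. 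If instead $ w \notin \translationsubgroup $, then the Main Theorem gives that $ w $ has the Diamond Property, so Lemma \ref{Ldiamondpropertyimpliesdependence} writes $ z_w $ as a $ \C $-linear combination of $ z_u $ and $ z_v $ with $ \ell ( u ) > \ell ( v ) > k $.

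I then observe that $ \Omega ( u ) = \Omega ( v ) = \Omega ( w ) = \tau $: one checks from the proof of Lemma \ref{Ldiamondpropertyimpliesdependence} that $ u $ and $ v $ have the form $ s w^{\prime} s $ and $ w^{\prime} s $ for some lateral conjugate $ w^{\prime} $ of $ w $ and some $ s \in \Delta_{\aff} $, and since $ \Delta_{\aff} \subset W_{\aff} $ and lateral conjugation is by elements of $ \Delta_{\aff} $, none of these operations alter the $ \Omega $-coordinate. Consequently each of $ z_u, z_v $ vanishes, either by the defining support condition of $ Z_{L,\tau} ( \heckefont{H} ) $ (when the length exceeds $ L $) or by the inductive hypothesis (when the length lies in $ ( k, L ] $ with $ \Omega $-coordinate $ \tau $). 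This closes the induction and shows $ z = 0 $, hence $ \phi $ is injective.

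The main obstacle is really the bookkeeping around $ \Omega $-coordinates and lateral conjugacy: one must verify that the auxiliary elements $ u, v $ produced by Lemma \ref{Ldiamondpropertyimpliesdependence} remain within the $ \Omega $-slice indexed by $ \tau $ (otherwise the inductive hypothesis would not apply), and that the $ \finiteweylgroup $-conjugation used in the translation case genuinely fits the formal definition of lateral conjugacy with respect to $ \Delta_{\aff} $. Both verifications are routine consequences of the axiomatization, \QCGlabel{1}--\QCGlabel{3}, together with the fact that $ W_{\aff} $ is normal in $ \widetilde{W} $.
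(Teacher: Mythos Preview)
Your argument is correct and is essentially the same as the paper's: both use the Main Theorem together with Lemma~\ref{Ldiamondpropertyimpliesdependence} to eliminate non-translation coefficients in favor of strictly longer ones, and Lemma~\ref{Llateralconjugacyimpliesdependence} (via \QCGlabel{3}) to identify translation coefficients within a $\finiteweylgroup$-orbit. Your packaging as an explicit injective evaluation map with downward induction on length, and your explicit check that the auxiliary elements $u,v$ stay in the $\tau$-slice of $\Omega$, make the bookkeeping a bit more transparent than the paper's phrasing in terms of the linear system, but the substance is identical.
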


\begin{proof}
Suppose $ w \in \widetilde{W} $ and $ w \notin \translationsubgroup $. Consider the linear system defining $ Z ( \heckefont{H} ) $ as a $ \C $-subspace of $ \heckefont{H} $. By applying the Main Theorem and Lemma \ref{Ldiamondpropertyimpliesdependence} repeatedly, one can express the variable $ h_w $ as a $ \C $-linear combination of variables $ h_x $ such that either $ \ell ( x ) > L $ or $ x \in \translationsubgroup $. Since $ Z_{L,\tau} ( \heckefont{H} ) $ is the $ \C $-subspace of $ Z ( \heckefont{H} ) $ defined by the additional equations $ h_x = 0 $ for all $ x \in \widetilde{W} $ such that either $ \ell ( x ) > L $ or $ \Omega ( x ) \neq \tau $, it follows that $ \dim_{\C} ( Z_{L,\tau} ( \heckefont{H} ) ) $ is \emph{at most} the total number of $ t \in \translationsubgroup $ such that $ \ell ( t ) \leq L $ and $ \Omega ( t ) = \tau $. On the other hand, if $ \mathcal{O} \in \translationsubgroup / \finiteweylgroup $ and $ t, t^{\prime} \in \mathcal{O} $ then $ t $ is laterally-conjugate to $ t^{\prime} $ and $ \Omega ( t ) = \Omega ( t^{\prime} ) $, so the dimension bound now follows from Lemma \ref{Llateralconjugacyimpliesdependence}.
\end{proof}

Note one extra detail from the proof: if $ z \in Z_{L,\tau} ( \heckefont{H} ) $ and both $ w \notin \translationsubgroup $ and $ \ell ( w ) = L $ then $ z_w = 0 $.

\begin{corollary}
Suppose that for each conjugacy class $ \mathcal{O} \in \translationsubgroup / \finiteweylgroup $ there is an element $ z_{ \mathcal{O} } \in Z ( \heckefont{H} ) $ such that $ \ell ( w ) \leq \ell ( \mathcal{O} ) $ for all $ w \in \widetilde{W} $ supporting $ z_{ \mathcal{O} } $ and such that $ \Omega ( w ) $ is the same for all $ w \in \widetilde{W} $ supporting $ z_{ \mathcal{O} } $.

If $ \{ z_{ \mathcal{O} } \}_{ \mathcal{O} \in \translationsubgroup / \finiteweylgroup } $ is linearly-independent then it is a basis for $ Z ( \heckefont{H} ) $.
\end{corollary}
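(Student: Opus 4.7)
The plan is to prove the corollary by a dimension-counting argument. I will show that $\dim_{\C} Z(\heckefont{H}) \leq |\translationsubgroup/\finiteweylgroup|$, then combine with the linear independence of $\{z_{\mathcal{O}}\}$ (a set of that same cardinality) to force equality and deduce that the collection spans.

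The key setup is to decompose $Z(\heckefont{H})$ along the $\Omega$-grading. For each $\tau \in \Omega$, let $Z^{\tau} \subset Z(\heckefont{H})$ denote the subspace of central elements supported only on $w \in \widetilde{W}$ with $\Omega(w) = \tau$. To justify the direct-sum decomposition $Z(\heckefont{H}) = \bigoplus_{\tau \in \Omega} Z^{\tau}$, I would inspect the centrality equations from \S\ref{SScentralityequations} and observe that multiplication by $\heckefont{T}_{s}$ (for $s \in \Delta_{\aff}$) preserves the $\Omega$-coordinate, while each equation $h_{x \tau'} = h_{\tau' x}$ relates coefficients at the same $\Omega$-coordinate. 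Hence the projection of any $z \in Z(\heckefont{H})$ onto its $\tau$-stratum is again central. By construction $Z^{\tau}$ is then the directed union $\bigcup_{L} Z_{L,\tau}(\heckefont{H})$, since every central element has finite support and hence bounded length.

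Next, apply Proposition \ref{Pdimensionbounds} stratum-by-stratum:
\begin{equation*}
\dim_{\C} Z^{\tau} \;=\; \sup_{L} \dim_{\C} Z_{L,\tau}(\heckefont{H}) \;\leq\; \sup_{L} N_{L,\tau} \;=\; \bigl| \{ \mathcal{O} \in \translationsubgroup/\finiteweylgroup : \Omega(\mathcal{O}) = \tau \} \bigr|,
\end{equation*}
where $\Omega(\mathcal{O})$ denotes the common $\Omega$-value of elements of $\mathcal{O}$ (well-defined because $\finiteweylgroup \subset W_{\aff}$ is killed by the projection to $\Omega$, which is moreover abelian). Summing over $\tau$ and using that the classes $\mathcal{O}$ partition $\translationsubgroup/\finiteweylgroup$ according to their $\Omega$-value,
\begin{equation*}
\dim_{\C} Z(\heckefont{H}) \;=\; \sum_{\tau \in \Omega} \dim_{\C} Z^{\tau} \;\leq\; |\translationsubgroup/\finiteweylgroup|.
\end{equation*}

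Finally, since $\{z_{\mathcal{O}}\}$ is a linearly independent subset of $Z(\heckefont{H})$ of cardinality $|\translationsubgroup/\finiteweylgroup|$, the dimension is also at least $|\translationsubgroup/\finiteweylgroup|$ (any linearly independent set embeds into some basis, valid for infinite cardinalities too); combined with the upper bound, equality holds, and any linearly independent set of this cardinality must be a basis. I expect the main thing to verify carefully to be the $\Omega$-decomposition step above, since it is what permits the dimension bound to be applied $\tau$-by-$\tau$; once that is in place, the cardinal-arithmetic bookkeeping needed when $\Omega$ or $\translationsubgroup$ is infinite is straightforward, because each $Z_{L,\tau}(\heckefont{H})$ is finite-dimensional. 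Notably, the argument does not require the common $\Omega$-value on the support of $z_{\mathcal{O}}$ to equal $\Omega(\mathcal{O})$ itself; only the cardinality of the indexing set plays a role.
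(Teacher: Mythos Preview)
Your argument has a genuine gap at the very last step. The claim ``any linearly independent set of this cardinality must be a basis'' is \emph{false} for infinite cardinals: in the countable direct sum $\C^{(\N)}$ the set $\{e_2, e_3, e_4, \ldots\}$ is linearly independent of cardinality $\aleph_0 = \dim \C^{(\N)}$ but obviously does not span. Since $\translationsubgroup$ contains the coroot lattice $\corootlattice$, the set $\translationsubgroup/\finiteweylgroup$ is countably infinite, so $Z(\heckefont{H})$ is infinite-dimensional and you are precisely in this bad situation. Your chain of inequalities $\dim Z^{\tau} \leq \sup_L N_{L,\tau} = |\{\mathcal{O}:\Omega(\mathcal{O})=\tau\}|$ is correct, but both sides are $\aleph_0$, so the bound carries no spanning information whatsoever.

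The paper avoids this by never leaving the finite-dimensional world: for each fixed pair $(L,\tau)$ one collects those $z_{\mathcal{O}}$ with $\ell(\mathcal{O})\leq L$ and uniform $\Omega$-component $\tau$, observes that they form a linearly independent set of exactly $N_{L,\tau}$ vectors inside $Z_{L,\tau}(\heckefont{H})$, and then Proposition~\ref{Pdimensionbounds} forces them to span $Z_{L,\tau}(\heckefont{H})$. Since $Z(\heckefont{H}) = \bigcup_{L,\tau} Z_{L,\tau}(\heckefont{H})$, the full set spans. Note that for this count to equal $N_{L,\tau}$ one is using that the uniform $\Omega$-value on the support of $z_{\mathcal{O}}$ matches $\Omega(\mathcal{O})$; so, contrary to your closing remark, this identification (or a pigeonhole argument replacing it) really is doing work. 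Your $\Omega$-decomposition $Z(\heckefont{H}) = \bigoplus_\tau Z^\tau$ is perfectly fine and is implicit in the paper's setup; the missing idea is to exploit the \emph{finite} dimension bound at each level $(L,\tau)$ rather than pass to a supremum.
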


\begin{proof}
Fix $ L \in \N $ and $ \tau \in \Omega $. Consider only those $ z_{ \mathcal{O} } $ for which $ \ell ( \mathcal{O} ) \leq L $ and for which the uniform $ \Omega $-component of those $ w $ supporting $ z_{ \mathcal{O} } $ is $ \tau $. By hypothesis, the set of all such $ z_{ \mathcal{O} } $ is a linearly-independent subset of $ N_{L,\tau} $ vectors in the subspace $ Z_{L,\tau} ( \heckefont{H} ) $. By Proposition \ref{Pdimensionbounds}, this set must also \emph{span} $ Z_{L,\tau} ( \heckefont{H} ) $. Since $ Z ( \heckefont{H} ) $ is the union over all pairs $ ( L, \tau ) $ of the subspaces $ Z_{L,\tau} ( \heckefont{H} ) $, the claim follows.
\end{proof}

\newpage

\section{Pictures} \label{Sfigures}

\begin{figure}[h]
\centering
\includegraphics[]{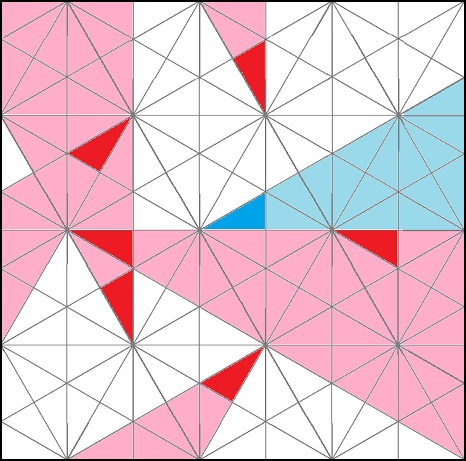}
\caption{The Main Theorem does not follow directly from Proposition \ref{PdominantnontranslationshaveDCP} (the dominant case). The blue alcove in the center is the base alcove $ \basealcove $ and the red alcoves surrounding it constitute a full lateral-conjugacy class. The light blue/pink cones are the unique dominant Weyl chamber containing the various alcoves (uniqueness is due to the fact that in this $ \Gaff{2} $ example each alcove contains only one special vertex in its closure).}
\label{example-g2-annoyingorbit}
\end{figure}

\newpage

\begin{figure}[h]
\centering
\includegraphics[]{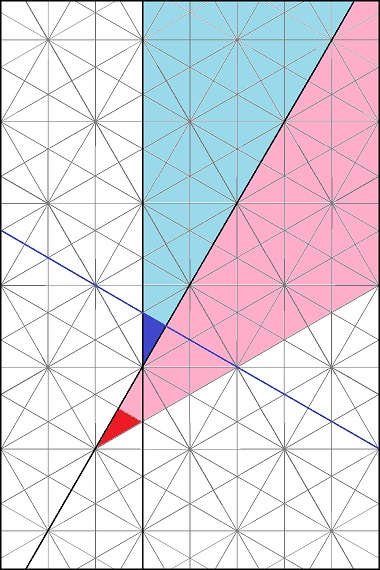}
\caption{Sample initial situation in the proof of Proposition \ref{PantidominantnontranslationshaveCP}. The purple alcove is $ \basealcove $ and the red alcove is $ \alcovefont{B} $. The light blue cone is the dominant Weyl chamber $ \basechamber $ and the pink cone is the chamber $ \weylchambersymbol_{\alcovefont{B}} $.}
\label{example-g2-inductionlemma00}
\end{figure}

\newpage

\begin{figure}[h]
\centering
\includegraphics[]{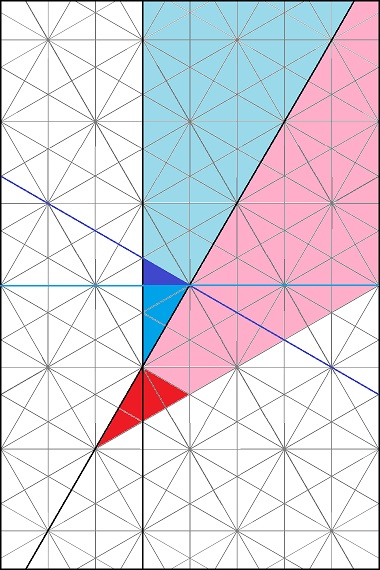}
\caption{After 3 iterations of Induction Lemma. The purple alcove is $ \alcovefont{A}^{\prime} $ and all possibilities for $ H^{\prime} $ (only one in this case) are also purple. All other alcoves in the gallery $ \alcovefont{A}_{\bullet} $ are blue, in particular the alcove $ \alcovefont{A} $ adjacent to $ \alcovefont{A}^{\prime} $. The blue wall is $ H $. The red alcoves constitute the gallery $ \alcovefont{B}_{\bullet} $. Observe that the current $ H $ is always (one of) the previous $ H^{\prime} $.}
\label{example-g2-inductionlemma03}
\end{figure}

\newpage

\begin{figure}[h]
\centering
\includegraphics[]{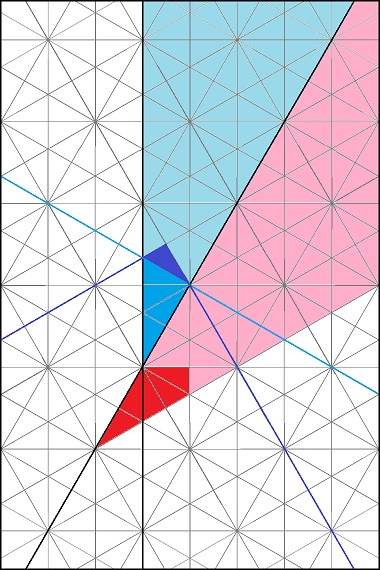}
\caption{After 4 iterations of Induction Lemma. Observe that (for the first time) the purple alcove and the nearest red alcove (its lateral conjugate) have the Direct Diamond Property. Nonetheless, the conclusion of Induction Lemma remains true for several more iterations. An important observation is that the conclusion of Induction Lemma \textbf{must} eventually fail \textbf{because} the Diamond Property is true.}
\label{example-g2-inductionlemma04}
\end{figure}

\newpage

\begin{figure}[h]
\centering
\includegraphics[]{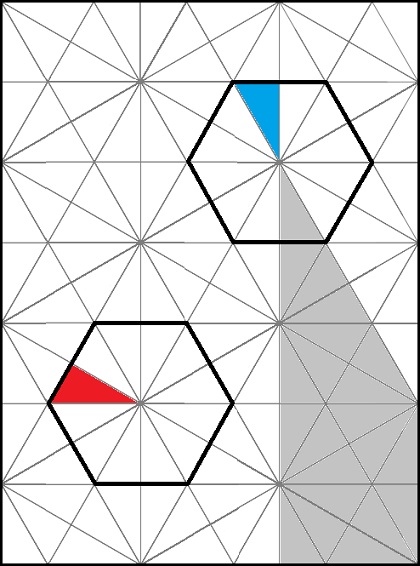}
\caption{Situation in which $ \mathcal{S} \neq \emptyset $ and $ d ( \vertexsymbol_{ \alcovefont{B} }, \weylchambersymbol_{\alcovefont{A}}^{\opp} ) > 0 $ in the proof of the Main Theorem (although in this particular example, the pair of alcoves already has the Direct Diamond Property realized by $ s_{\aff} $ so no action is necessary). The blue alcove is $ \alcovefont{A} $, the red alcove is $ \alcovefont{B} $, and the grey cone is $ \weylchambersymbol_{\alcovefont{A}}^{\opp} $. The black outline is merely a visual aid.}
\label{example-g2-2ndinductionlemma-00}
\end{figure}

\newpage

\begin{figure}[h]
\centering
\includegraphics[]{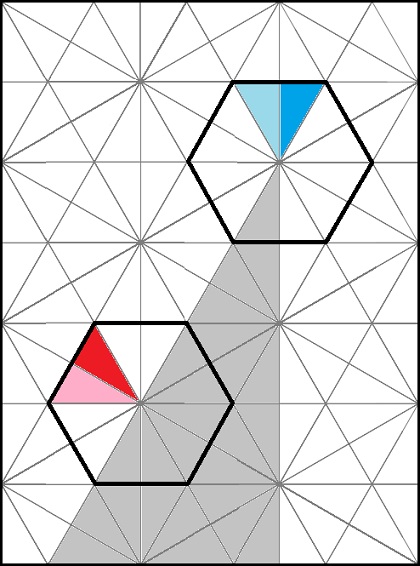}
\caption{After 1 iteration of the process described in the proof of the Main Theorem. The value $ d ( \vertexsymbol_{ \alcovefont{B} }, \weylchambersymbol_{\alcovefont{A}}^{\opp} ) $ is now $ 0 $ and $ \vertexsymbol_{ \alcovefont{B} } \in \overline{ \weylchambersymbol }_{\alcovefont{A}}^{\opp} $. In this particular example, it is possible to laterally conjugate once more to arrange $ \alcovefont{B} \subset \weylchambersymbol_{\alcovefont{A}}^{\opp} $, but this is not always possible. The light blue/pink alcove merely represents the previous position of the blue/red alcove.}
\label{example-g2-2ndinductionlemma-01}
\end{figure}

\newpage

\begin{bibdiv}
\begin{biblist}

\bib{BGW}{book}{
   author={Borovik, Alexandre V.},
   author={Gelfand, I. M.},
   author={White, Neil},
   title={Coxeter matroids},
   series={Progress in Mathematics},
   volume={216},
   publisher={Birkh\"auser Boston Inc.},
   place={Boston, MA},
   date={2003},
   pages={xxii+264},
   isbn={0-8176-3764-8},
}

\bib{bourbaki}{book}{
   author={Bourbaki, Nicolas},
   title={Lie groups and Lie algebras. Chapters 4--6},
   series={Elements of Mathematics (Berlin)},
   note={Translated from the 1968 French original by Andrew Pressley},
   publisher={Springer-Verlag},
   place={Berlin},
   date={2002},
   pages={xii+300},
   isbn={3-540-42650-7},
}

\bib{BT1}{article}{
   author={Bruhat, F.},
   author={Tits, J.},
   title={Groupes r\'eductifs sur un corps local},
   language={French},
   journal={Inst. Hautes \'Etudes Sci. Publ. Math.},
   number={41},
   date={1972},
   pages={5--251},
}

\bib{GP}{article}{
   author={Geck, Meinolf},
   author={Pfeiffer, G{\"o}tz},
   title={On the irreducible characters of Hecke algebras},
   journal={Adv. Math.},
   volume={102},
   date={1993},
   number={1},
   pages={79--94},
}

\bib{haines}{article}{
   author={Haines, Thomas J.},
   title={The combinatorics of Bernstein functions},
   journal={Trans. Amer. Math. Soc.},
   volume={353},
   date={2001},
   number={3},
   pages={1251--1278 (electronic)},
}

\bib{HRo}{article}{
   author={Haines, Thomas J.},
   author={Rostami, Sean},
   title={The Satake isomorphism for special maximal parahoric Hecke
   algebras},
   journal={Represent. Theory},
   volume={14},
   date={2010},
   pages={264--284},
}

\bib{HN}{article}{
author={He, Xuhua},
author={Nie, Sian},
title={Minimal length elements of extended affine Coxeter groups, II},
status={To appear in \emph{Compositio Math.}},
eprint={arXiv:1112.0824},
date={2011}
}

\bib{humphreys}{book}{
   author={Humphreys, James E.},
   title={Reflection groups and Coxeter groups},
   series={Cambridge Studies in Advanced Mathematics},
   volume={29},
   publisher={Cambridge University Press},
   place={Cambridge},
   date={1990},
   pages={xii+204},
   isbn={0-521-37510-X},
}

\bib{kottwitz}{article}{
   author={Kottwitz, Robert E.},
   title={Isocrystals with additional structure. II},
   journal={Compositio Math.},
   volume={109},
   date={1997},
   number={3},
   pages={255--339},
   issn={0010-437X},
}

\bib{lusztig}{article}{
   author={Lusztig, George},
   title={Affine Hecke algebras and their graded version},
   journal={J. Amer. Math. Soc.},
   volume={2},
   date={1989},
   number={3},
   pages={599--635},
   issn={0894-0347},
}

\bib{PRH}{article}{
   author={Pappas, G.},
   author={Rapoport, M.},
   title={Twisted loop groups and their affine flag varieties},
   note={With an appendix by T. Haines and Rapoport},
   journal={Adv. Math.},
   volume={219},
   date={2008},
   number={1},
   pages={118--198},
}

\bib{roro}{article}{author={Rostami, Sean},title={The Bernstein presentation for general connected reductive groups},date={2013},status={To appear in \emph{J. London Math. Soc.}},eprint={arXiv:1312.7374}}

\bib{tits}{article}{
   author={Tits, J.},
   title={Reductive groups over local fields},
   conference={
      title={Automorphic forms, representations and $L$-functions (Proc.
      Sympos. Pure Math., Oregon State Univ., Corvallis, Ore., 1977), Part
      1},
   },
   book={
      series={Proc. Sympos. Pure Math., XXXIII},
      publisher={Amer. Math. Soc.},
      place={Providence, R.I.},
   },
   date={1979},
   pages={29--69},
}

\end{biblist}
\end{bibdiv}

\end{document}